\newtheorem{theorem}{Theorem}[section]
\newtheorem{remark}{Remark}
\newtheorem{lemma}[theorem]{Lemma}
\newtheorem{corollary}[theorem]{Corollary}
\newcommand{\dt}{\Delta t}
\newcommand{\dx}{\Delta x}
\begin{document}
\title{Method of Lines Transpose:  A Fast Implicit Wave Propagator}


\author[M. Causley]{M. Causley}
\address{Department of Mathematics, Michigan State University, East Lansing, MI 48824}
\email{causleym@math.msu.edu}
\thanks{This work has been supported in part by AFOSR grants FA9550-11-1-0281, FA9550-12-1-0343 and FA9550-12-1-0455, NSF grant DMS-1115709, and MSU Foundation grant SPG-RG100059.}

\author[A. Christlieb]{A. Christlieb}

\author[Y. G\"{u}\c{c}l\"{u}]{Y. G\"{u}\c{c}l\"{u}}

\author[E. Wolf]{E. Wolf}

\subjclass{Primary 65N12, 65N40, 35L05}

\date{}

\begin{abstract}

%

As a follow up to \cite{Causley2013}, we provide a detailed description of the numerical implementation of an $O(N)$, A-stable, second order accurate solution of the wave equation, constructed from semi-discrete boundary value problems. We improve on the previous algorithm by replacing the Lax-type correction used in \cite{Causley2013}, which was necessary for convergence when $\Delta t < \Delta x/c$, with a more accurate spatial quadrature, which we prove is convergent. 

We also demonstrate that the resulting solver remains fast even in the case of unstructured meshes, can incorporate domain decomposition, and allows for the implementation of Dirichlet, Neumann, periodic  and outflow boundary conditions.

Building upon results for the 1d formulation, we utilize alternate direction implicit (ADI) splitting to achieve a fast $O(N)$ solver in higher spatial dimensions. Our solver is built upon line objects and, combined with the flexibility of the integral solver, allows us to solve problems on arbitrary spatial domains, by embedding the boundary in a regular Cartesian mesh. Our solver is designed to couple with particle codes, where scale separation is an issue. We therefore demonstrate the ability of our solver to take time steps well beyond that of the Courant-Friedrichs-Lewy (CFL) stability limit of explicit codes.

\bigskip
\noindent {\em Keywords}: Method of Lines Transpose, Tranverse
Method of Lines, Implicit Methods, Boundary Integral Methods,
Alternating Direction Implicit Methods, ADI schemes

\bigskip

\end{abstract}

\maketitle

\section{Introduction}

Numerical solutions to the wave equation have been an area of investigation for many decades. The wave equation is ubiquitous in the physical world, arising in acoustics, electromagnetics, and fluid dynamics. Our main interest is in electromagnetic wave propagation in plasmas, which are challenging due not only to the nonlinear coupling of the fields with ionized particles, but also to the disparate time scales introduced by the plasma frequency, which can vary by several orders of magnitude from the speed of light (normalized by an appropriate length scale).

Perhaps to most popular method for kinetic plasma simulations is the particle-in-cell (PIC) method \cite{Birdsall1976}. PIC simulations are comprised of two principle components. The first is the particle push, which relies on a Lagrangian description to move macro-particles in phase space, according to the Vlasov equation. The second component is the field solver, which couples the electromagnetic fields to the moving particles, according to Maxwell's equations. The particles are projected to the mesh, and the fields back to the particles using polynomial interpolation (the so-called PIC weighting).

The field solver is typically built with a finite difference time domain (FDTD), or finite volume time domain (FVTD) algorithm. Plasma problems generally require a solution which allows for time steps large compared to that dictated by the propagation speed. Since explicit schemes must obey the Courant-Friedrichs-Lewy (CFL) stability limit, $\Delta t \leq \Delta x/c $, where $\Delta t$ is the time step, and $\Delta x$ is the  width of a mesh cell. To overcome this time step restriction, we propose the use of an implicit method. Several works have implemented Maxwell solvers using an alternate direction implicit (ADI) formulation, \cite{Fornberg,Fornberga,Smithe2009}, resulting in A-stable schemes for which large time steps can be taken. Furthermore, higher orders of temporal accuracy can be achieved by Richardson extrapolation, provided the dispersion error is sufficiently small, so that the coarse and fine solutions are not out of phase. These field solvers are designed for the first order formulation of Maxwell's equations, so that the divergence free nature of the numerical solution can be ensured \cite{Smithe2009}. One drawback of FDTD and FVTD formulations of Maxwell's equations is the inability to accurately describe time dependent point sources, which are  generally not collocated on the mesh points. For this reason, we consider integral formulations, for which convolution with a delta function will give exact spatial resolution.

Historically, differential formulations of PDEs were desirable over boundary integral methods, which were computationally expensive. Over the past few decades, fast summation methods, such as the fast multipole method \cite{greengard1987fast}, and the tree-code algorithm \cite{Barnes1986} have been introduced, which reduces the computational complexity of computing particle interactions from $O(N^2)$ to $O(N)$, or $O(N\log N)$. These acceleration methods have been extended to a variety of kernels, and as a result, fast methods for computing boundary integral solutions of many PDEs have been developed, such as the wave equation \cite{Coifman1993, Alpert2000,Alpert2002,Li2006}, Helmholtz equation \cite{Cheng1999}, Poisson equation \cite{Lindsay2001}, and even the modified Helmholtz equation \cite{Gimbutas2002,Li2009,Cheng2006}. This work in this paper was initially motivated by the development of accurate particle methods for simulating bounded plasmas utilizing tree-codes \cite{Christlieb2004,Li2009,Lindsay2001}. However, rather than utilize a tree-code approach, we extend our methods to the multi-dimensional case using an ADI splitting.

Although the MOL$^T$ approach is more commonly used for parabolic problems \cite{Salazar2000, Jia2008}, it has been considered sparsely for the wave equation \cite{Schemann1998}. The work we present here is closely related to an independent set of works recently published \cite{Bruno2010,Lyon2010}, in which the wave equation is solved using a Fourier continuation-ADI (FC-ADI) method. First a semi-discrete boundary value problem (BVP) is formulated using MOL$^T$; next, an ADI splitting results in a series of one-dimensional BVPs, which are solved in turn by employing a Fourier continuation method, which scales as $O(N\log N)$. The FC-ADI method presented is sixth order in space and first order in time, but is subsequently lifted to fourth order in time using Richardson extrapolation, without pollution of the solution due to dispersion error \cite{Lyon2010}. Additionally, the FC-ADI algorithm is unconditionally stable, and can be extended to arbitrary geometries by using periodic extensions of the functions to impose boundary conditions at arbitrary (non-mesh point) locations.

We differ from the work of \cite{Bruno2010,Lyon2010} in that we expand the wave function $u$ using classical polynomial bases, rather than Fourier bases. This method of approach would normally lead to a convolution of complexity $O(N^2)$. However, we derive a fast convolution algorithm that utilizes the analytic properties of the one-dimensional Green's function, which is a decaying exponential. Due to the shift-invariance of the exponential, global convolutions can be decomposed into a local and far-away contribution, using exponential recursion. The result is an $O(N)$ fast convolution algorithm, which can impose boundary conditions at exact boundary values, without the need of the periodic extensions used in \cite{Bruno2010}. In fact, by not relying on FFTs, the convolution remains $O(N)$ even if the mesh spacing is irregular, or boundary points do not lie on the mesh. As shown below, we incorporate Dirichlet, Neumann, periodic and outflow boundary conditions in one spatial dimension with relative ease. Furthermore, due to the ADI splitting, the boundary integrals in higher dimensions are never explicitly formed. Instead, all boundary conditions are implemented by solving one-dimensional (two-point) boundary value problems along ADI lines, and the ADI sweeps couple the information, to implicitly construct the boundary integral. Thus, our algorithm remains $O(N)$ in higher dimensions.

The analytical study of our wave solver was presented in \cite{Causley2013}, where it was shown to be A-stable, and convergent to the wave equation with second order accuracy in space and time, provided that a Lax correction is included to correct for the spatial discretization error. We shall present a slight modification of this result in Section \ref{sec:Fast}, which removes the necessity of the Lax correction, without affecting stability. In short, the polynomial bases we use to approximate the wave function $u$ must be of degree $\geq 2$.

The rest of this paper is laid out as follows. In section \ref{sec:molt}, the semi-discrete boundary integral solution to the wave equation in one spatial dimension is presented. We show how boundary conditions, as well as transmission conditions to a finite domain can be applied. The latter of these is useful in particular for domain decomposition, and thus parallelization of the algorithm, as well as in deriving outflow boundary conditions. In section \ref{sec:Fast}, we present the spatial discretization of the boundary integral solution, and describe how a fast $O(N)$ algorithm can be designed for the fully discrete solution. To avoid the Lax correction used in \cite{Causley2013}, a compact form of Simpson's rule is used for the spatial quadrature. We present the case of a regular grid, as well as unstructured mesh. Next, we incorporate the ADI splitting to solve the wave equation in higher dimensions. In section \ref{sec:ADI}, we propose a splitting in which all spatial derivatives are brought to one side of the equation, so that intermediate variables can no longer be strictly interpreted as intermediate time levels of the solution. As such, we detail the consistent incorporation of boundary integral terms, and the inclusion of sources. In section \ref{sec:results} we demonstrate our solution to be fast, second order accurate, and A-stable, even on non-rectangular domains. Finally, we conclude with several remarks in section \ref{sec:conclusion}.

\section{Semi-discrete solution using MOL$^T$}
\label{sec:molt}

The numerical algorithm is based on the initial boundary value problem for the wave equation in one spatial dimension
\begin{align}
	\label{eqn:wave}
	\frac{\partial^2 u}{\partial x^2}  - \frac{1}{c^2}\frac{\partial^2 u}{\partial t^2}&= -S(x,t), \quad  a< x < b, \quad t>0		\\
	u(x,0)	&= f(x), \quad a < x < b \nonumber														\\
	u_t(x,0)	&= g(x), \quad a < x < b, \nonumber
\end{align}
where $c$ is the propagation speed. The problem is well-posed once consistent boundary conditions are appended. We will consider below several important cases:  Dirichlet
\begin{equation}
	\label{eqn:Dirichlet}
	u(a,t) = U_L(t), \quad u(b,t) = U_R(t),
\end{equation}
Neumann
\begin{equation}
	\label{eqn:Neumann}
	u_x(a,t) = V_L(t), \quad u_x(b,t) = V_R(t),
\end{equation}
and periodic boundary conditions
\begin{equation}
	\label{eqn:Periodic}
	u(a,t) = u(b,t), \quad u_x(a,t) = u_x(b,t).
\end{equation}
We also address outflow boundary conditions in 1D, which can be formulated exactly in terms of local differential operators 
\begin{equation}
	\label{eqn:outflow}
	u_t(a,t) -c u_x(a,t) =0, \quad  u_t(b,t)+c u_x(b,t) = 0.
\end{equation}

\subsection{Time discretization}
We begin by discretizing $u_{tt}$ using the time-centered finite difference approximation
\[
	u_{tt}^n = \frac{u^{n+1}-2u^n+u^{n-1}}{\Delta t^2} - \frac{\Delta t^2}{12} u_{tttt}(x,\eta), \quad \eta \in [t_{n-1},t_{n+1}].
\]
Now, in order to obtain an implicit method, we require that the Laplacian be taken at time $t_{n+1}$. However, since the finite difference stencil is centered, we consider a symmetric 3-point averaging for the Laplacian
\[
	\frac{\partial^2}{\partial x^2} u^n = \frac{\partial^2}{\partial x^2} \left(  u^n+\frac{u^{n+1}-2u^{n}+u^{n-1}}{\beta^2} \right)-\frac{\dt^2}{\beta^2} u_{ttxx}(x,\eta),
\]
where $\beta>0$ is a parameter, and which produces the semi-discrete equation
\begin{equation}
	\label{eqn:betaTC} 
	\left(\frac{\partial^2}{\partial x^2} -\frac{\beta^2}{(c\Delta t)^2} \right)\left(u^n + \frac{u^{n+1}-2u^n+u^{n-1}}{\beta^2} \right) = - \frac{\beta^2}{(c\Delta t)^2}u^n -S(x,t_n).
\end{equation}
This method will result in a purely dispersive scheme, in that the numerical solution does not introduce numerical diffusion. Here $\beta>0$ is a free parameter which can be chosen to reduce the truncation error, but still produce an A-stable scheme. In fact, we have the following
\begin{lemma}
	Equation \eqref{eqn:betaTC} is unconditionally stable for $0<\beta \leq 2$. For $\beta = \sqrt{12}$, the scheme is fourth order accurate, but conditionally stable. The optimal choice which maintains A-stability and minimizes the error is $\beta =2$.
\end{lemma}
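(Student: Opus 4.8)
The plan is to perform a von Neumann (Fourier) stability analysis of the recursion \eqref{eqn:betaTC}, and a Taylor-expansion truncation-error analysis, treating them separately. First I would substitute a single Fourier mode $u^n(x) = \rho^n e^{ikx}$ into the homogeneous version of \eqref{eqn:betaTC} (with $S \equiv 0$). Writing $p = (k c \Delta t / \beta)^2 > 0$, the spatial operator $\partial_x^2 - \beta^2/(c\Delta t)^2$ acting on $e^{ikx}$ contributes a factor $-(k^2 + \beta^2/(c\Delta t)^2)$, and after dividing through one obtains a quadratic characteristic equation for the amplification factor $\rho$ of the form $\rho^2 - 2A\rho + 1 = 0$, where $A = A(p,\beta)$ is a rational function of $p$. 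Because the product of the two roots equals $1$, the scheme is (neutrally) stable precisely when the roots lie on the unit circle, i.e. when $A$ is real with $|A| \le 1$; this is the standard dichotomy for a purely dispersive two-step scheme. So the first key step is to compute $A$ explicitly and then show that $|A| \le 1$ holds for all $p > 0$ if and only if $0 < \beta \le 2$. I expect $A$ to take the shape $A = 1 - \tfrac{\beta^2}{2}\cdot\frac{p}{1+p} \Big/ (\text{something})$ or similar, so that the inequality $-1 \le A$ reduces, after clearing positive denominators, to a linear inequality in $p$ that is uniformly satisfied exactly when $\beta \le 2$; the bound $A \le 1$ should be automatic since $p/(1+p) > 0$.

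The second part is the accuracy claim. Here I would expand the exact solution in the local truncation error of \eqref{eqn:betaTC}, using the two approximations quoted just before the lemma: the time-centered second difference carries an error $-\tfrac{\Delta t^2}{12} u_{tttt}$, and the $\beta$-averaged Laplacian carries an error $-\tfrac{\Delta t^2}{\beta^2} u_{ttxx}$. Using the PDE itself to convert $x$-derivatives into $t$-derivatives ($u_{xx} = \tfrac{1}{c^2}u_{tt}$, hence $u_{ttxx} = \tfrac{1}{c^2}u_{tttt}$ modulo source terms), the combined $O(\Delta t^2)$ error term becomes proportional to $\big(\tfrac{1}{12} - \tfrac{1}{\beta^2 c^2}\cdot(\text{factor})\big)u_{tttt}$, which vanishes when $\beta^2 = 12$. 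This proves the fourth-order statement; conditional stability for that value follows since $\sqrt{12} > 2$ violates the stability window found in part one. Finally, "optimal" is a constrained-optimization remark: among $\beta \in (0,2]$ the leading error coefficient is monotone in $\beta$ (it decreases in magnitude as $\beta \uparrow \sqrt{12}$), so it is smallest at the endpoint $\beta = 2$.

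The main obstacle I anticipate is the algebra in the stability step: getting $A(p,\beta)$ into a form where the inequality $|A|\le 1$ transparently reduces to $\beta \le 2$ uniformly in $p \in (0,\infty)$, including checking the behavior as $p \to 0^+$ and $p \to \infty$ (the boundary cases where the inequality is tight). One has to be careful that the characteristic roots are genuinely on the unit circle and not a defective double root $\rho = \pm 1$ causing linear-in-$n$ growth; at $\beta = 2$ this needs a separate check at the critical frequency. The truncation-error bookkeeping is routine by comparison, though one must keep track of whether the source $S$ contributes to the error constant (it does not affect the homogeneous-order statement).
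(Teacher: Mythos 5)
Your proposal follows essentially the same route as the paper: a von Neumann substitution leading to the quadratic $\rho^2-2A\rho+1=0$ with $A=1-\tfrac{\beta^2}{2}\tfrac{p}{1+p}$ (exactly the coefficient the paper bounds via the Schur criterion, giving $0<\beta\le 2$), followed by the same truncation-error expansion whose leading coefficient $\propto(12-\beta^2)/\beta^2$ vanishes at $\beta=\sqrt{12}$ and is minimized over the stable range at $\beta=2$. The only differences are cosmetic (you work with $u_{tttt}$ where the paper converts to $u_{xxxx}$, and you add a harmless extra check on the borderline double-root case), so the argument is correct and matches the paper's proof.
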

\begin{proof}
Consider first the stability of the scheme \eqref{eqn:betaTC} with $S = 0$. In order to ensure that the approximation remains finite we take $\beta \neq 0$, and without loss of generality, we assume $\beta>0$. Let $u^n(x) = \rho^n e^{ikx} u^{0}$, and define $\omega = kc$. Substitution and cancellation of the common terms yields the Von-Neumann polynomial
\begin{align*}
	&\left(-\left(\frac{\omega}{c}\right)^2 -\frac{\beta^2}{(c\Delta t)^2}\right)\left(\rho +\frac{\rho^2-2\rho+1}{\beta^2}\right) =-\frac{\beta^2}{(c\Delta t)^2}\rho\quad \implies \\
	&\rho^2-\left(2-\frac{(\beta\omega\Delta t)^2}{\beta^2+(\omega\Delta t)^2}\right)\rho+1 = 0
\end{align*}

Stability follows from the roots of this quadratic polynomial satisfying $|\rho|\leq 1$, and applying the Schur criterion leads to
\[
	\left |2-\frac{(\beta\omega\Delta t)^2}{\beta^2+(\omega\Delta t)^2} \right | \leq 2.
\]
A-stability follows from this inequality being satisfied for all values of $\omega$, and so we find $0<\beta \leq 2$.

Next, we look at the truncation error of the method. Upon substituting the exact solution $u$ into the semi-discrete equation \eqref{eqn:betaTC} and setting $S = 0$, we find the global truncation error
\begin{align*}
	\tau := &\left(\frac{\partial^2}{\partial x^2} -\frac{\beta^2}{(c\Delta t)^2} \right)\left(u^n + \frac{u^{n+1}-2u^n+u^{n-1}}{\beta^2} \right) + \frac{\beta^2}{(c\Delta t)^2}u^n \\
		= &\frac{\Delta t^2}{\beta^2}u_{ttxx} - \frac{\Delta t^2}{12c^2}u_{tttt}+O(\Delta t^4) \\
		= &\frac{(c\Delta t)^2}{12\beta^2}u_{xxxx}\left(12-\beta^2\right) +O(\Delta t^4).
\end{align*}
Thus, the truncation error will be 4th order if we choose $\beta = \sqrt{12}$. However, this is not in the range of A-stability. Observe that as $\beta$ increase, the error constant in the truncation error decreases. Thus, we define the value $\beta=2$ as optimal in the sense that it produces the A-stable scheme with the smallest discretization error.
\end{proof}

				\subsection{Integral solution and update equation}
The differential operator which appears in the semi-discrete equation \eqref{eqn:betaTC} is the modified Helmholtz operator, which we define by
\begin{equation}
	\label{eqn:HelmholtzL}
	\mathcal{L}_\beta[w](x) := \left(\frac{1}{\alpha^2}\frac{\partial^2}{\partial x^2} - 1\right)w(x), \quad \alpha = \frac{\beta}{c\dt}.
\end{equation}
A modified Helmoltz equaiton of the form
\begin{equation}
	\label{eqn:Helmholtz_Equation}
	\mathcal{L}_\beta[w](x) = -u(x), \quad a\leq x \leq b
\end{equation}
can be formally solved by inverting the Helmholtz operator using the Green's function (details can be found in \cite{Causley2013}). We define convolution with this Green's function by  the integral operator
\begin{equation}
	\label{eqn:Iu}
	I[u](x) := \alpha\int_a^b u(y)e^{-\alpha|x-y|}dy, \quad a\leq x \leq b,
\end{equation}
so that the integral solution is given by $\mathcal{L}_\beta^{-1}[u](x) = \frac{1}{2}w(x)$, where
\begin{equation}
	\label{eqn:L_Inverse}
	w(x):= \underbrace{I[u](x)}_{\text{Particular Solution}}+ \underbrace{\vphantom{I[u](x)} A e^{-\alpha(x-a)} + B e^{-\alpha(b-x)}}_{\text{Homogeneous Solution}}.
\end{equation}
The coefficients $A$ and $B$ of the homogeneous solution are determined by applying boundary conditions. The homogeneous solution \eqref{eqn:L_Inverse} can also be used as a means to incorporate information about $u$ from outside the domain $\Omega = [a,b]$. In this case, the coefficients $A$ an $B$ are enforcing transmission conditions. That is, suppose that the support of $u$ is $\Omega = \mathbb{R}$, so that we have the free space solution to the modified Helmholtz equation
\[
	w(x) = \alpha \int_{-\infty}^\infty u^n(y)e^{-\alpha|x-y|}dy.
\]
Now, we shall only ever evaluate $w(x)$ for $a\leq x \leq b$. Then, 
\begin{align}
	w(x) =& I[u](x) + \alpha\int_{-\infty}^a u^n(y)e^{-\alpha(x-y)}dy + \alpha \int_b^\infty u^n(y) e^{-\alpha(y-x)}dy \nonumber \\
		\label{eqn:Transmission}
		=& I[u](x) + A^ne^{-\alpha(x-a)} + B^ne^{-\alpha(b-x)},
\end{align}
where the homogeneous coefficients are
\begin{align}
		A^n =& \alpha\int_{-\infty}^a u^n(y)e^{-\alpha(a-y)}dy \\
		B^n =& \alpha \int_b^\infty u^n(y) e^{-\alpha(y-b)}dy,
\end{align}
and do not depend on $x$.

\begin{remark}
	The distinction to be made here is that the integral solution \eqref{eqn:L_Inverse} is valid when the domain $[a,b]$ is the full support of the function $u$, and the homogeneous solution is used to apply boundary conditions. This is in contrast to the integral solution \eqref{eqn:Transmission}, for which the support of $u$ extends beyond $[a,b]$, but we shall only ever evaluate $I[u](x)$ for $a\leq x \leq b$. This latter result will be used below to derive outflow boundary conditions, and, along with the fast convolution algorithm of Section \ref{sec:Fast}, to build an efficient domain decomposition algorithm.
\end{remark}

We now make a few key observations about the particular solution, which will be used extensively in the ensuing discussion. The integral operator can be decomposed into a left and right oriented integral, split at $y=x$ so that 
\begin{equation}
	\label{eqn:ILR}
	I[u](x) = I^L[u](x)+I^R[u](x),
\end{equation}
where
\[
	I^L[u](x) = \alpha \int_a^x e^{-\alpha(x-y)}u(y)dy, \quad  I^R[u](x)= \alpha\int_{x}^b e^{-\alpha(y-x)}u(y)dy.
\]
We may interpret $I^L$ and $I^R$ as the "characteristics" of $I$, as they independently satisfy first order "initial value problems"
\begin{align}
	(I^L)'(y) + \alpha I^L(y) = +\alpha u(y), \quad a<y<x,	\quad I^L(a) = 0 \\
	(I^R)'(y) - \alpha I^R(y) = -\alpha u(y), \quad x<y<b,	\quad I^R(b) = 0,
\end{align}
where the prime denotes spatial differentiation. The solutions to these equations can be found by the integrating factor method. Integrating $I^L$ from $x-\delta_L$ to $x$, and $I^R$ from $x$ to $x+\delta_R$, we find
\begin{align}
	\label{eqn:IL_def}
	I^L[u](x) &= I^L[u](x-\delta_L) e^{-\alpha \delta_L}+ J^L[u](x), \quad J^L[u](x):= \alpha \int_{0}^{\delta_L} u(x-y) e^{-\alpha y}dy \\
	\label{eqn:IR_def}
	I^R[u](x) &= I^R[u](x+\delta_R) e^{-\alpha \delta_R}+ J^R[u](x), \quad J^R[u](x):= \alpha \int_{0}^{\delta_R} u(x+y) e^{-\alpha y}dy.
\end{align}
The recursive updates \eqref{eqn:IL_def} and \eqref{eqn:IR_def} are exact (in space), and making $\delta_L$ and $\delta_R$ small (typically, $\delta_L =\delta_R = \Delta x$) effectively localizes the contribution of the integrals. We also observe that combining \eqref{eqn:IL_def} and \eqref{eqn:IR_def}, the total integral operator also satisfies a recursive definition
\begin{equation}
	\label{eqn:I_Rec}
	I[u](x) = e^{-\alpha \delta_L}I^L[u](x-\delta_L)+e^{-\alpha \delta_R}I^R[u](x+\delta_R) + \alpha \int_{-\delta_L}^{\delta_R} u(x+y) e^{-\alpha |y|}dy.
\end{equation}
In this expression, the remaining integral contains the "local" information of $I[u]$.

We now identify the semi-discrete form of the wave equation \eqref{eqn:betaTC} with the modified Helmholtz equation \eqref{eqn:Helmholtz_Equation} using
\[
	w(x) = 2\left(u^n(x)+\frac{u^{n+1}(x)-2u^n(x)+u^{n-1}(x)}{\beta^2}\right), \quad u(x) =\left(u^n(x)+\frac{1}{\alpha^2}S^n(x)\right)
\]
and upon inverting the Helmholtz operator and solving for $u^{n+1}$, find the update equation
\begin{equation}
	\label{eqn:Integral_Solution_Full}
	u^{n+1} = -(\beta^2-2)u^n-u^{n-1}+\frac{\beta^2}{2}\left(I\left[u^n+\frac{1}{\alpha^2}S^n\right]+A^n e^{-\alpha(x-a)}+B^ne^{-\alpha(b-x)}\right).
\end{equation}

\subsection{Consistency of the Particular Solution}   \label{Consistency_Particular_Solution}
In \cite{Causley2013}, quadrature was performed on the convolution integral \eqref{eqn:Iu} using the midpoint and trapezoidal rules. It was shown that a Lax-type correction was required to achieve a consistent numerical scheme for the wave equation, due to coupling between the spatial quadrature error, and the temporal truncation error. This is apparently a difficulty that is intrinsic to using the MOL$^T$ to formulate second order boundary value problems. In particular, a discussion of this same phenomenon, for parabolic problems, is carried out by Bruno and Lyon in Section 4 of \cite{Bruno2010}. It is indicated therein that the numerical solution, which is found using Fourier continuation methods, diverges upon letting $\Delta t \to 0$ when the spatial parameter $h$ held fixed. Bruno and Lyon \cite{Bruno2010} mitigate this difficulty by correcting the solution with a finite difference solver.

We will now show how such difficulties can be avoided. Specifically if $u(x+ y)$ in equation \eqref{eqn:I_Rec} is approximated with a polynomial of degree $p\geq 2$ and integrated analytically, the resulting fully discrete update will be stable and convergent. In the computational algorithm, however the recursive updates \eqref{eqn:IL_def} and \eqref{eqn:IR_def} will be used. Since they are computed separately, we must insist that each of these updates uses the same polynomial, so as to avoid jumps in higher derivatives, which (although they will be $O(\Delta x^p)$ for some $p$), will cause numerical instabilities if $\Delta t \ll \Delta x$.

Our reasoning is as follows. If from the update equation \eqref{eqn:Integral_Solution_Full} we solve for the finite difference approximation of $\frac{1}{c^2}u_{tt}$ and neglect the homogeneous solution, we find
\[
	\frac{u^{n+1}-2u^{n}+u^{n-1}}{(c\Delta t)^2} = \frac{\alpha^2}{2}\left(I\left[u^n+\frac{1}{\alpha^2}S^n \right]-2u^n\right),
\]
where again $\alpha = \beta/(c\Delta t)$. Thus, letting $\Delta t \to 0$ on the left hand side is equivalent to $\alpha \to \infty$ on the right hand side, and in order to recover the wave equation \eqref{eqn:wave}, the following limit must hold
\[
	\lim_{\alpha \to \infty} \frac{\alpha^2}{2}\left(I\left[u^n+\frac{1}{\alpha^2}S^n \right]-2u^n\right) = u^n_{xx} + S^n,
\]
for $a<x<b$. We justify the neglect of the homogeneous solution in \eqref{eqn:Integral_Solution_Full} because, as $\alpha \to \infty$, both exponentials converge to zero for any $x \neq a,b$. Showing that this limit holds for the semi-discrete solution (i.e. when $u$ is continuous in space) follows from integrating analytically the Taylor expansion of $u(x+y)$ against the Green's function analytically.

We will find need of the following
\begin{lemma}
\label{Exp_Int}
	For integers $m\geq 0$ and real $\nu>0$,
	\begin{align}
		E_{m}(\nu) :=  \nu\int_0^1 \frac{z^m}{m!} e^{-\nu z}dz = \frac{1}{\nu^m} \left(1-e^{-\nu }P_m(\nu)\right)
	\end{align}
	where
	\[
		P_m(\nu) = \sum_{\ell=0}^m\frac{\nu^\ell}{\ell!}
	\]
	is the Taylor series expansion of order $m$ of $e^{\nu}$.
\end{lemma}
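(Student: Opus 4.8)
The plan is to prove the identity by induction on $m$, using a single integration by parts to peel off one power of $z$ at each step. For the base case $m=0$ one computes directly
\[
	E_0(\nu) = \nu\int_0^1 e^{-\nu z}\,dz = 1 - e^{-\nu},
\]
which matches the claimed right-hand side since $P_0(\nu)=1$. For the inductive step I would take $u = z^m/m!$ and $dv = \nu e^{-\nu z}\,dz$, so that $v = -e^{-\nu z}$, giving
\[
	E_m(\nu) = \Big[-\tfrac{z^m}{m!}e^{-\nu z}\Big]_0^1 + \int_0^1 \tfrac{z^{m-1}}{(m-1)!}e^{-\nu z}\,dz = -\tfrac{e^{-\nu}}{m!} + \tfrac{1}{\nu}E_{m-1}(\nu).
\]
Substituting the induction hypothesis $E_{m-1}(\nu) = \nu^{-(m-1)}\big(1 - e^{-\nu}P_{m-1}(\nu)\big)$ and using the defining relation $P_m(\nu) = P_{m-1}(\nu) + \nu^m/m!$ then collapses the right-hand side to $\nu^{-m}\big(1 - e^{-\nu}P_m(\nu)\big)$, which closes the induction.

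An alternative that avoids induction is to exhibit an explicit antiderivative of the integrand directly. Setting $F(z) = -e^{-\nu z}\sum_{k=0}^m \frac{z^{m-k}}{(m-k)!\,\nu^k}$, a short reindexing ($j = k+1$ in the sum arising from the derivative of the polynomial factor) shows that the sum telescopes under differentiation, leaving $F'(z) = \nu e^{-\nu z}\, z^m/m!$. Then $E_m(\nu) = F(1) - F(0)$; at $z=0$ only the $k=m$ term of the sum survives and contributes $-\nu^{-m}$, while at $z=1$, relabeling $\ell = m-k$ turns the sum into $\nu^{-m}\sum_{\ell=0}^m \nu^\ell/\ell! = \nu^{-m}P_m(\nu)$, recovering $\nu^{-m}\big(1 - e^{-\nu}P_m(\nu)\big)$.

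There is no genuine obstacle here; this is a routine computation. The only bookkeeping worth care is the index shift identifying $\sum_{k=0}^m z^{m-k}/((m-k)!\,\nu^k)$ with the Taylor polynomial $P_m$, and tracking that the boundary term at $z=0$ vanishes for $m\geq 1$ in the integration-by-parts argument but contributes the constant $1$ (through the $k=m$ term) in the antiderivative argument. I would present the induction proof as the main argument, since it uses only the elementary primitive $\int \nu e^{-\nu z}\,dz = -e^{-\nu z}$ and makes the appearance of $P_m$ transparent through the recursion $P_m = P_{m-1} + \nu^m/m!$.
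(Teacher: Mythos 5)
Your induction argument, peeling off one power of $z$ per integration by parts, is correct and is exactly the paper's (unstated in detail) proof, which simply invokes ``iterated use of integration by parts.'' The explicit-antiderivative variant you sketch is also fine, but the main inductive argument already matches the paper's approach.
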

\begin{proof}
The proof of this lemma follows from iterated use of integration by parts.
\end{proof}

We are now prepared to prove the following
\begin{theorem}
\label{Consistency_thm}
Let $u^n(x)$ be a semi-discrete solution given by \eqref{eqn:Integral_Solution_Full}. Then, $u^n(x)$ converges to $u(x,t_n)$ satisfying the wave equation \eqref{eqn:wave}, iff for each $x \in (a,b)$,
\begin{equation}
	\label{eqn:Consistency}
	\lim_{\alpha \to \infty} \alpha^2\left(I[u^n]-2u^n\right) =2 \frac{\partial^2}{\partial x^2}u^n.
\end{equation}
\end{theorem}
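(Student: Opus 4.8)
The plan is to recast the update formula \eqref{eqn:Integral_Solution_Full} as a centered-in-time discretization of a \emph{modified} wave equation, to track each term as $\Delta t\to 0$ — equivalently as $\alpha=\beta/(c\Delta t)\to\infty$ — and then to invoke the Lax equivalence theorem together with the A-stability established earlier in this section. The first step is to solve \eqref{eqn:Integral_Solution_Full} for the time-centered second difference: moving $-(\beta^2-2)u^n-u^{n-1}$ across and subtracting $2u^n-u^{n-1}$, the $\pm u^{n-1}$ terms cancel, and using $\beta^2=(c\Delta t)^2\alpha^2$ together with linearity of $I$ one is left with
\begin{equation*}
\frac{u^{n+1}-2u^n+u^{n-1}}{(c\Delta t)^2}=\frac{\alpha^2}{2}\bigl(I[u^n]-2u^n\bigr)+\frac12 I[S^n]+\frac{\alpha^2}{2}\Bigl(A^n e^{-\alpha(x-a)}+B^n e^{-\alpha(b-x)}\Bigr).
\end{equation*}
This identity is the crux: it exhibits the semi-discrete scheme as the ordinary second-order time stencil applied to the spatially \emph{exact} operator $\tfrac{\alpha^2}{2}(I[\cdot]-2(\cdot))$, plus a source term and a homogeneous correction.

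Next I would dispose of the last two groups for a fixed interior $x\in(a,b)$. The kernel $\tfrac{\alpha}{2}e^{-\alpha|x-y|}$ is an approximate identity: $\tfrac{\alpha}{2}\int_{\mathbb R}e^{-\alpha|x-y|}\,dy=1$, the mass falling outside $[a,b]$ equals $\tfrac12 e^{-\alpha(x-a)}+\tfrac12 e^{-\alpha(b-x)}\to 0$, and $\tfrac{\alpha}{2}\int_a^b\bigl(S^n(y)-S^n(x)\bigr)e^{-\alpha|x-y|}\,dy\to 0$ by continuity, so $\tfrac12 I[S^n](x)\to S^n(x)$. The homogeneous coefficients are uniformly bounded — $|A^n|,|B^n|\le\sup|u^n|$ directly from their integral definitions (or, in the boundary-condition case, from the imposed data) — while $\alpha^2 e^{-\alpha(x-a)}$ and $\alpha^2 e^{-\alpha(b-x)}$ tend to zero for $x\in(a,b)$, so that whole group vanishes in the limit.

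Then I pass to the limit in both directions. If $u^n(x)\to u(x,t_n)$ with $u$ a smooth solution of \eqref{eqn:wave}, the left side tends to $c^{-2}u_{tt}(x,t_n)=u_{xx}(x,t_n)+S(x,t_n)$; comparing with the right side and using the previous step forces $\lim_{\alpha\to\infty}\tfrac{\alpha^2}{2}\bigl(I[u^n]-2u^n\bigr)=u_{xx}^n$, which is exactly \eqref{eqn:Consistency}, giving necessity. Conversely, if \eqref{eqn:Consistency} holds, the identity shows the method is a consistent approximation of the wave equation — second order in time from the centered stencil, with the Laplacian reproduced exactly in the $\alpha\to\infty$ limit — and since the A-stability lemma of this section gives unconditional stability for $0<\beta\le 2$, the Lax equivalence theorem delivers convergence.

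I expect the genuine obstacle to be the middle term: $I[u^n]-2u^n$ itself already tends to $0$ as $\alpha\to\infty$ by the approximate-identity property, so $\alpha^2$ times it is a $0\cdot\infty$ indeterminate form, and pinning down its limit is precisely the content of \eqref{eqn:Consistency}. Verifying that \eqref{eqn:Consistency} genuinely holds for sufficiently regular $u$ — which I would carry out by substituting the Taylor expansion of $u(x+y)$ into the recursion \eqref{eqn:I_Rec} and integrating term-by-term against $e^{-\alpha|y|}$ using Lemma \ref{Exp_Int} (odd powers dropping by symmetry, the $y^2$ term producing $u_{xx}^n$, the remainder $O(\alpha^{-2})$) — is the computational heart of the argument; some care is also needed to make the limits above uniform in $x$ on compact subsets of $(a,b)$ so that the Lax argument applies cleanly.
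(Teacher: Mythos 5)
Your proposal is correct and follows essentially the same route as the paper: the identity expressing the centered time difference as $\tfrac{\alpha^2}{2}(I[\cdot]-2\cdot)$ plus source and homogeneous terms is exactly the paper's pre-theorem discussion (including the same justification for dropping the exponentially small homogeneous contribution at interior points), and the "computational heart" you identify — Taylor expanding $u^n(x\pm y)$ in the recursion \eqref{eqn:I_Rec}, integrating against the exponential kernel via Lemma \ref{Exp_Int}, with odd powers cancelling and an $O(\alpha^{-2})$ remainder — is precisely the paper's proof of \eqref{eqn:Consistency}. The only addition is your explicit Lax-equivalence framing of the sufficiency direction, which the paper leaves implicit at the same informal level of rigor.
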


\begin{remark}
	This theorem is stated without the inclusion of sources for simplicity. However, all results apply, provided that $S \in L^1([a,b])$.
\end{remark}

\begin{proof}
Upon performing a Taylor expansion of $u^n(x\pm y)$ appearing in the integrals of the recurrence relations \eqref{eqn:IL_def} and \eqref{eqn:IR_def} (with $\delta_L = \delta_R = \delta$), and evaluating with the aid of Lemma \ref{Exp_Int}, we find
\begin{align*}
	\alpha \int_{0}^\delta u^n(x\pm y)e^{-\alpha y}dy &=  \nu \int_{0}^1 \left(\sum_{m=0}^3 \frac{z^m}{m!}\left(\pm \frac{\nu}{\alpha}\right)^m\frac{\partial^m u^n(x)}{\partial x^m}+O(z^4\delta^4) \right)e^{-\nu z}dz \\
		&= \sum_{m=0}^3 \left(\frac{\pm 1}{\alpha}\right)^m \frac{\partial^m u^n(x)}{\partial x^m}\left(1-e^{-\nu}P_m(\nu)\right) + O\left(\frac{1}{\alpha^4}\right)
\end{align*}
where $\nu = \alpha\delta$. When these expansions are summed, all odd powers of $\alpha$ cancel, and the recursive form of the integral \eqref{eqn:I_Rec} becomes
\[
	I(x) = 2u + \frac{2}{\alpha^2}\frac{\partial^2u}{\partial x^2}+ e^{-\nu}\left(I^L(x-\delta)+I^R(x+\delta)-2u - \frac{2}{\alpha^2}\frac{\partial^2u}{\partial x^2}\left(1+\nu +\frac{\nu^2}{2}\right)\right) + O\left(\frac{1}{\alpha^4}\right)
\]
and so, since $\alpha^2\left(I[u^n]-2u^n\right) =2 u^n_{xx} +O\left(e^{-\alpha \delta}+\frac{1}{\alpha^2}\right)$, taking the limit produces precisely the desired result \eqref{eqn:Consistency}.
\end{proof}
\begin{remark}
The proof of this theorem, while straightforward, underscores the necessary conditions for a fully discrete algorithm to recover a convergent approximation of the wave equation. That is, in order to recover the consistency condition \eqref{eqn:Consistency}, the first three terms of the Taylor expansion, produced by integrating polynomials up to degree 2 against an exponential, must be computed exactly. We therefore demand that the spatial quadrature used to discretize the integral \eqref{eqn:I_Rec} be exact up to degree $p\geq 2$.
\end{remark}

Consider now a fully discrete solution $u^n_j = u^n(x_j)$, which is obtained by discretizing the integral \eqref{eqn:I_Rec}, to order $p$. Recall that when constructing the numerical algorithm, we will use the recurrence updates for $I^L$ \eqref{eqn:IL_def} and $I^R$ \eqref{eqn:IR_def} to perform the updates. Therefore, to ensure the resulting polynomial approximation for the integral of equation \eqref{eqn:I_Rec} is continuous up to order $p$, we must use the same polynomial to perform quadrature on the integrals in equations \eqref{eqn:IL_def} and \eqref{eqn:IR_def}. Otherwise, the jump in higher derivatives may be of the form $(C_1 \Delta x^p) z$, and upon integrating against the Green's function we would find
\[
	\alpha^2\left(I[u^n]-2u^n\right) = C_1 \Delta x^p \alpha +2 u^n_{xx} +O\left(e^{-\alpha \delta}+\frac{1}{\alpha^2}\right),
\]
which, for fixed $\Delta x$, diverges as $\alpha \to \infty$. We formalize this in the following

\begin{corollary}
\label{Consistency_cor}
Let $u^n_j$ be a fully discrete solution, formed by replacing $u^n(x_j\pm y)$ with polynomials of degree $p\geq 2$ appearing in the integrals of equations \eqref{eqn:IL_def} and \eqref{eqn:IR_def}. Then, the discrete analog of equation \eqref{eqn:Consistency}, namely
\begin{equation}
	\label{eqn:Consistency_Disc}
	\lim_{\alpha \to \infty} \alpha^2\left(I[u^n](x_j)-2u^n_j\right) =2 \frac{\partial^2}{\partial x^2}u^n_{j}+O(\Delta x^p)
\end{equation}
is satisfied for each $x_j \in (a,b)$, iff the resulting polynomial approximation for the integral of equation \eqref{eqn:I_Rec} is continuous up to order $p$.
\end{corollary}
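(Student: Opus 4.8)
\noindent\emph{Proof proposal.} The plan is to re-run the argument of Theorem~\ref{Consistency_thm}, with the Taylor expansion of $u^n$ replaced by the degree-$p$ polynomials $q^L,q^R$ that are actually used to evaluate the local integrals $J^L[u^n](x_j)$ and $J^R[u^n](x_j)$ in \eqref{eqn:IL_def}--\eqref{eqn:IR_def}. The first step is to note that, holding $\delta=\Delta x$ fixed while $\alpha=\beta/(c\dt)\to\infty$, every term weighted by $e^{-\alpha\delta}$ in the recursions \eqref{eqn:IL_def}, \eqref{eqn:IR_def}, \eqref{eqn:I_Rec} decays faster than any power of $\alpha^{-1}$, while the discrete convolutions $I^L[u^n]$ and $I^R[u^n]$ remain bounded uniformly in $\alpha$ (the geometric bound implicit in \eqref{eqn:IL_def}--\eqref{eqn:IR_def} survives the replacement of $u^n$ by a bounded interpolant). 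Hence, even after multiplication by $\alpha^2$, only the local contribution matters:
\[
	I[u^n](x_j) = \alpha\int_0^{\delta} q^R(y)\,e^{-\alpha y}\,dy + \alpha\int_0^{\delta} q^L(y)\,e^{-\alpha y}\,dy + O\!\left(e^{-\alpha\Delta x}\right),
\]
where $q^R(y)\approx u^n(x_j+y)$ and $q^L(y)\approx u^n(x_j-y)$ on $[0,\delta]$; glued at $y=0$ these pieces form the piecewise-polynomial approximation $Q$ of the integrand of \eqref{eqn:I_Rec} that appears in the statement.

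Next I would expand $q^L$ and $q^R$ in powers of $y$ about $y=0$ and integrate termwise with Lemma~\ref{Exp_Int}, which turns $y^m$ into $\alpha^{-m}(1-e^{-\nu}P_m(\nu))$ with $\nu=\alpha\delta$; this reduces $I[u^n](x_j)$ to a finite sum over $0\le m\le p$ of $\alpha^{-m}(1-e^{-\nu}P_m(\nu))$ times the sum of the corresponding coefficients of $q^L$ and $q^R$, plus $O(e^{-\alpha\Delta x})$. Since $q^R(y)\approx u^n(x_j+y)$ and $q^L(y)\approx u^n(x_j-y)$ with degree $p\ge 2$, these coefficients reproduce $\partial_x^m u^n(x_j)$ up to the interpolation error, and the odd-$m$ contributions of $q^L$ and $q^R$ cancel \emph{exactly} precisely when the two polynomials are mirror images, $q^L(y)=q^R(-y)$ --- equivalently, when $Q$ is a single degree-$p$ polynomial, which (two degree-$p$ pieces agreeing in all derivatives through order $p$ being identical) is the statement's ``continuous up to order $p$'' condition. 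Granting it, the $m=0$ term returns $2u^n_j$ (both polynomials interpolate $u^n_j$), the $m=2$ term returns $\tfrac{2}{\alpha^2}(\partial_x^2 u^n_j+O(\Delta x^{p}))$ from the degree-$p$ approximation of the second derivative, and all further even terms are $O(\alpha^{-4})$; multiplying by $\alpha^2$ and letting $\alpha\to\infty$ yields exactly \eqref{eqn:Consistency_Disc}. This is Theorem~\ref{Consistency_thm} with the spatial error carried along, and it settles the ``if'' direction.

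For the converse I would argue by contraposition. If the two quadratures do not arise from a common interpolant, then $Q$ has, generically, a jump in its first derivative at $y=0$; because both pieces still reproduce the Taylor expansion of $u^n$ through degree $p$, this jump has size $O(\Delta x^{p})$, and carrying it through the computation above --- which is precisely the calculation displayed just before the corollary --- leaves a term $C_1\Delta x^{p}\,\alpha$ in $\alpha^2(I[u^n](x_j)-2u^n_j)$, which diverges as $\alpha\to\infty$ for fixed $\Delta x$; a higher-order-only mismatch instead degrades the surviving spatial error below $O(\Delta x^{p})$. In either case \eqref{eqn:Consistency_Disc} fails, so it does force the quadrature polynomials to coincide, i.e.\ the polynomial approximation of \eqref{eqn:I_Rec} to be continuous up to order $p$. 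The step I expect to be the main obstacle is exactly this bookkeeping: one must verify that the recursive accumulation in \eqref{eqn:IL_def}--\eqref{eqn:I_Rec} neither cancels nor amplifies the $\alpha$-linear defect, and, in the compatible case, that the residual spatial error is genuinely $O(\Delta x^{p})$ rather than one order coarser --- both of which rest on using a single, centered polynomial for the two half-integrals and on the exact form of $E_m(\nu)$ from Lemma~\ref{Exp_Int}.
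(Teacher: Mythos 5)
Your argument is correct and follows essentially the same route as the paper, which proves the corollary only through the discussion preceding it: the ``if'' direction is Theorem~\ref{Consistency_thm} re-run with the interpolating polynomials and Lemma~\ref{Exp_Int} (odd terms cancelling because a single polynomial is used for both half-integrals, leaving the $O(\Delta x^p)$ spatial error), and the ``only if'' direction is exactly the paper's observation that a derivative jump of size $O(\Delta x^p)$ at the cell center integrates against the Green's function to produce a term $C_1\Delta x^p\,\alpha$ that diverges as $\alpha\to\infty$ for fixed $\Delta x$.
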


In Section \ref{sec:Fast}, we derive the update equations for the fully discrete numerical scheme, with $p=2$. This removes the necessity for the Lax correction used in \cite{Causley2013}.

				\subsection{Boundary conditions}
Before turning our attention to the discretization of the update equation \eqref{eqn:Integral_Solution_Full}, we will discuss the homogeneous part of the integral solution \eqref{eqn:L_Inverse}, which is used to enforce the boundary conditions.

Let us begin with Dirichlet boundary conditions \eqref{eqn:Dirichlet}. Evaluating the semi-discrete solution \eqref{eqn:Integral_Solution_Full} at $x = a$ and $b$, we find
\begin{align*}
	U_L(t_{n+1})=-(\beta^2-2)U_L(t_n)-U_L(t_{n-1}) + \frac{\beta^2}{2} \left(I\left[u^n+\frac{1}{\alpha^2}S^n \right](a)+A  + B e^{-\alpha(b-a)}\right), \\
	U_R(t_{n+1})=-(\beta^2-2)U_R(t_n)-U_R(t_{n-1}) + \frac{\beta^2}{2} \left(I\left[u^n+\frac{1}{\alpha^2}S^n \right](b)+A e^{-\alpha(b-a)} + B\right),
\end{align*}
which, after solving for the unknown coefficients can be written as
\begin{align*}
		A^n +\mu	B^n	&= -w_a^D, \\
	\mu	A^n +	B^n	&=  -w_b^D,
\end{align*}
with
\begin{align*}
	w_a^D &= I\left[u^n+\frac{1}{\alpha^2}S^n \right](a) - \frac{2}{\beta^2}\left(U_L(t^{n+1})+(\beta^2-2)U_L(t^{n}) +U_L(t^{n-1}) \right), \\
	w_b^D &= I\left[u^n+\frac{1}{\alpha^2}S^n \right](b) - \frac{2}{\beta^2}\left(U_R(t^{n+1})+(\beta^2-2)U_R(t^{n})+U_R(t^{n-1}) \right),
\end{align*}
and $\mu = e^{-\alpha(b-a)}$. Homogeneous boundary conditions are recovered upon setting $U_L(t) = U_R(t) = 0$. Solving the resulting linear system for the unknowns $A^n$ and $B^n$ gives
\begin{align}
	\label{eqn:wh_Dirichlet}
	A = -\left(\frac{w_a^D - \mu w_b^D }{1-\mu^2}\right), \quad B = -  \left(\frac{w_b^D - \mu w_a^D}{1-\mu^2}\right).
\end{align}
Before considering Neumann conditions, first observe that all dependence on $x$ in the integral solution \eqref{eqn:L_Inverse} is on the Green's function, which is a simple exponential function. Using this, we obtain the following identities
\begin{align}
	\label{eqn:DtN}
	I'(a) = \alpha I(a), \quad  I'(b) = -\alpha I(b).
\end{align}
Now, differentiating the semi-discrete solution \eqref{eqn:Integral_Solution_Full}, and applying the Neumann boundary conditions \eqref{eqn:Neumann} at $x=a$ and $b$ yields
\begin{align*}
	V_L(t_{n+1})=-(\beta^2-2)V_L(t_n)-V_L(t_{n-1}) + \alpha \frac{\beta^2}{2} \left(I\left[u^n+\frac{1}{\alpha^2}S^n \right](a)-A  + B e^{-\alpha(b-a)}\right), \\
	V_R(t_{n+1})=-(\beta^2-2)V_R(t_n)-V_R(t_{n-1}) + \alpha \frac{\beta^2}{2} \left( - I\left[u^n+\frac{1}{\alpha^2}S^n \right](b)-A e^{-\alpha(b-a)} + B\right),
\end{align*}
which, after solving for the unknown coefficients can be written as
\begin{align*}
		A^n - \mu	B^n	&= w_a^N, \\
	-\mu	A^n +	B^n	&= w_b^N,
\end{align*}
with
\begin{align*}
	w_a^N &= I\left[u^n+\frac{1}{\alpha^2}S^n \right](a) - \frac{2}{\alpha\beta^2}\left(V_L(t^{n+1})+(\beta^2-2)V_L(t^{n}) +V_L(t^{n-1}) \right), \\
	w_b^N &= I\left[u^n+\frac{1}{\alpha^2}S^n \right](b) + \frac{2}{\alpha\beta^2}\left(V_R(t^{n+1})+(\beta^2-2)V_R(t^{n})+V_R(t^{n-1})\right).
\end{align*}
Upon solving the linear system we obtain
\begin{align}
	\label{eqn:wh_Neumann}
	A = \left(\frac{w_a^N +\mu w_b^N}{1-\mu^2}\right), \quad B =  \left(\frac{w_b^N+\mu w_a^N}{1-\mu^2}\right).
\end{align}

Finally, we can also impose periodic boundary conditions, by assuming that 
\[
	u^n(b) = u^n(a), \quad u^n_x(a) = u^n_x(b), \quad n\geq 0.
\]
Enforcing this in the semi-discrete solution \eqref{eqn:Integral_Solution_Full} then yields
\begin{align*}
	I[u^n](a) + A + B\mu  &= I[u^n](b) + A\mu + B, \\
	\alpha \left(I[u^n](a) - A + B\mu\right)  &= \alpha\left(- I[u^n](b) - A\mu + B\right),
\end{align*}
where we have used the identity \eqref{eqn:DtN} applied to derivatives of $I$. Solving this linear system is accomplished quickly by dividing the second equation by $\alpha$, and either adding or subtracting it from the first equation, to produce
\begin{align}
	\label{eqn:wh_Periodic}
	A = \frac{I[u^n](b)}{1-\mu}, \quad B = \frac{I[u^n](a)}{1-\mu}.
\end{align}
\begin{remark}
	The cases of applying different boundary conditions at $x=a$ and $b$ are not considered here, but the details follow from an analogous procedure to that demonstrated above.
\end{remark}

				\subsection{Outflow boundary conditions}
When computing wave phenomena, whether we are interested in finite or infinite domains, it is often the case that we must restrict our attention to some smaller subdomain $\Omega$ of the problem, which does not include the physical boundaries. We say that $\Omega$ is the \textit{computational domain}, and that the boundary $\partial \Omega$ is the non-physical, or \textit{artificial boundary}. Under these circumstances, it is necessary to enforce an outflow, or non-reflecting boundary condition, which allows the wave to leave the computational domain, but not incur (non-physical) reflections at the artificial boundary.

For the one-dimensional wave equation \eqref{eqn:wave} the exact outflow boundary conditions \eqref{eqn:outflow} turn out to be local in space and time. We emphasize that this is only the case in one spatial dimension, but we shall utilize this fact to obtain an outflow boundary integral solution from the integral equation \eqref{eqn:Transmission}. We extend the support of our function to $(-\infty,\infty)$, and extend the definition of the outflow boundary conditions to the domains exterior to $[a,b]$
\begin{align}
	u_t+cu_x = 0, \quad x\geq b, \\
	u_t-cu_x = 0, \quad x\leq a.
\end{align}

Next, assume the initial conditions have some compact support; for simplicity we will take this support to be $\Omega_0 = [a,b]$. Then after a time $t=t_n$, the domain of dependence of $u^n(x)$ is $\Omega_t = [a-ct_n,b+c t_n]$, since the propagation speed is $c$. Now the free space solution \eqref{eqn:Transmission} becomes
\begin{align}
	w(x)	=& \alpha \int_{a-c t_n}^{b+c t_n} e^{-\alpha|x-y|} u^n(y) dy \nonumber \\
			=& I[u](x)+ A^ne^{-\alpha(x-a)} + B^ne^{-\alpha(b-x)}
\end{align}
with coefficients
\begin{align}
		\label{eqn:A_Out_Def}
	A^n &= \alpha\int_{a-ct_n}^{a} e^{-\alpha(a-y)} u^n(y) dy, \\
		\label{eqn:B_Out_Def}
	B^n &= \alpha\int_{b}^{b+ct_n} e^{-\alpha(y-b)} u^n(y) dy.
\end{align}

At first glance, these coefficients are not at all helpful, as they require computing integrals along spatial domains which not only are outside of the computational domain, but also grow linearly in time. However, we will now make use of the extended boundary conditions to turn these spatial integrals into time integrals, which exist at precisely the endpoints $x=a$ and $b$ respectively. Consider first $x>b$. By assumption, this region contains only right traveling waves, $u(x,t) = u(x-ct)$, and by tracing backward along a characteristic ray we find
\[
	u(b+y,t) = u\left(b,t-\frac{y}{c}\right), \quad y>0.
\]
Thus,
\begin{align*}
	B^n &= \alpha \int_{0}^{ct_n} e^{-\alpha y} u(b+y,t_n) dy \\
		&= \alpha c \int_{0}^{t_n} e^{-\alpha c s} u\left(b,t_n-s\right) ds
\end{align*}
and so $B^n$ is equivalently represented by a convolution in time, rather than space. Now, knowing the history of $u$ at $x=b$ is sufficient to impose outflow boundary conditions. Furthermore, we find in analog to equation \eqref{eqn:IR_def}, a temporal recurrence relation due to the exponential
\begin{align*}
	B^n	&= \alpha c \int_{0}^{\Delta t} e^{-\alpha c s} u\left(b,t_n-s\right) ds + e^{-\alpha c \Delta t}\left(\int_0^{t_{n-1}} e^{-\alpha c s} u\left(b,t_{n-1}-s\right) ds \right) \\
		&= \beta \int_{0}^{1} e^{-\beta z} u\left(b,t_n-z\Delta t\right) dz +e^{-\beta} B^{n-1},
\end{align*}
where $\beta = \alpha c \Delta t$, by definition \eqref{eqn:HelmholtzL}. Thus, the coefficient $B^n$, which imposes an outflow boundary condition at $x=b$, can be computed locally in both time and space. To maintain second order accuracy, we fit $u$ with a quadratic interpolant
\[
	u(b,t_n-z\Delta t) \approx p(z) = u^n(b) -\frac{z}{2}\left(u^{n+1}(b)-u^{n-1}(b)\right) + \frac{z^2}{2}\left(u^{n+1}(b)-2u^n(b)+u^{n-1}(b)\right)
\]
and integrate the expression analytically using Lemma \ref{Exp_Int} to arrive at
\begin{equation}
	\label{eqn:Outflow_Update_Intermediate}
	B^n = e^{-\beta}B^{n-1} +\gamma_0 u^{n+1}(b)+ \gamma_1 u^n(b)+ \gamma_2 u^{n-1}(b)
\end{equation}
where
\begin{align*}
	\gamma_0 =& \frac{E_2(\beta)-E_1(\beta) }{2} = \frac{(1-e^{-\beta})}{\beta^2}-\frac{(1+e^{-\beta})}{2\beta} \\
	\gamma_1 =& E_0(\beta)-E_2(\beta)                 = -2\frac{(1-e^{-\beta})}{\beta^2}+ \frac{2}{\beta}e^{-\beta} +1  \\
	\gamma_2 =& \frac{E_2(\beta)+E_1(\beta)}{2} = \frac{(1-e^{-\beta})}{\beta^2}+\frac{(1-3e^{-\beta})}{2\beta} - e^{-\beta}.
\end{align*}
In this outflow update equation \eqref{eqn:Outflow_Update_Intermediate}, the quantities $u^{n+1}(b)$ and $B^n$ are both unknown. In order to determine these values, we must also evaluate the update equation for $u^{n+1}$ \eqref{eqn:Integral_Solution_Full} at $x=b$
\[
	u^{n+1}(b)+(\beta^2-2)u^{n}(b)+u^{n-1}(b) = \frac{\beta^2}{2}\left(I(b)+A^n \mu + B^n\right),	\quad \mu = e^{-\alpha(b-a)}.
\]
We now use these two equations to solve for $u^{n+1}(b)$, and eliminate it from the outflow update equation \eqref{eqn:Outflow_Update_Intermediate}, so that
\begin{equation}
	\label{eqn:Outflow_Update_B}
	-\Gamma_0 \mu A^n + (1-\Gamma_0)B^n = e^{-\beta}B^{n-1} +\Gamma_0 I(b) +\Gamma_1u^n(b)+ \Gamma_2 u^{n-1}(b)
\end{equation}
where
\[
	\Gamma_0 = \frac{\beta^2}{2}\gamma_0, \quad
	\Gamma_1 = \gamma_1-\gamma_0(\beta^2-2),	\quad
	\Gamma_2 = \gamma_2-\gamma_0
\]

\begin{remark}
	While this procedure could be avoided by omitting $u^{n+1}(b)$ in the interpolation stencil, it turns out to be necessary to obtain convergent outflow boundary conditions.
\end{remark}
Likewise, upon considering $x<a$, we find
\begin{equation}
	\label{eqn:Outflow_Update_A}
	(1-\Gamma_0) A^n -\Gamma_0 \mu B^n = e^{-\beta}A^{n-1} +\Gamma_0 I(a) +\Gamma_1u^n(a)+ \Gamma_2 u^{n-1}(a).
\end{equation}
Solving the resulting linear system produces
\begin{equation}
	\label{eqn:Outflow_Update}
	A^n = \frac{(1-\Gamma_0)w_a^{\text{Out}} +\mu \Gamma_0w_b^{\text{Out}}} {(1-\Gamma_0)^2-(\mu \Gamma_0)^2}, \quad
	B^n = \frac{(1-\Gamma_0)w_b^{\text{Out}} +\mu \Gamma_0w_a^{\text{Out}}} {(1-\Gamma_0)^2-(\mu \Gamma_0)^2},
\end{equation}
where
\begin{align*}
	w_a^{\text{Out}} &= e^{-\beta}A^{n-1} +\Gamma_0 I(a) +\Gamma_1u^n(a)+ \Gamma_2 u^{n-1}(a), \\
	w_b^{\text{Out}} &= e^{-\beta}B^{n-1} +\Gamma_0 I(b) +\Gamma_1u^n(b)+ \Gamma_2 u^{n-1}(b)
\end{align*}

\section{Treatment of point sources, and soft sources}
\label{sec:sources}

We now consider the inclusion of source terms. We are predominantly interested in the case where $S(x,t)$ consists of a large number of time dependent point sources. However, it is often the case that in electromagnetics problems, a soft source is prescribed to excite waves of a  prescribed frequency, or range of frequencies, within the domain. A soft source is so named because, although incident fields are generated at a prescribed fixed spatial location, no scattered fields are generated.

The implementation of a soft source $\sigma(t)$ at $x = x_s$ is accomplished by prescribing the source condition
\begin{equation}
	\label{eqn:condition}
	u(x_s,t) = \sigma(t).
\end{equation}
However, it can be shown that if we set
\begin{align}
	S(x,t)= \frac{2}{c}\sigma'(t) \delta(x-x_s)
\end{align}
and insert it into the wave equation \eqref{eqn:wave}, then the soft source condition \eqref{eqn:condition} is satisfied, and the solutions are equivalent. Thus, a soft source is nothing more than a point source, whose time-varying field is integrated by the wave equation.

Upon convolving this source term with the Green's function according to \eqref{eqn:Iu}, we find
\begin{align*}
	I\left[\frac{1}{\alpha^2}S\right](x)=& \frac{1}{\alpha}\int_a^b \left(\frac{2}{c}\sigma'(t_n)\delta(x-x_s) \right)e^{-\alpha|x-y|}dy \\
			=& \frac{2\Delta t}{\beta} \sigma'(t_n) e^{-\alpha|x-x_s|},
\end{align*}
where the definition of $\alpha = \beta/(c\Delta t)$ has been utilized.

\begin{remark}
	It is often the case that taking the analytical derivative $\sigma'(t_n)$ is to be avoided, for various reasons. In this case, any finite difference approximation which is of the desired order of accuracy can be substituted.
\end{remark}

Likewise for general point sources,
\[
	S(x,t) = \sum_{i} \tilde{\sigma}_i(t) \delta(x-x_i)
\]
the corresponding form of the source term is
\begin{equation}
	\label{eqn:Delta_Source}
	I\left[\frac{1}{\alpha^2}S\right](x) = \frac{c \Delta t}{\beta}\sum_i  \tilde{\sigma}_i(t_n) e^{-\alpha|x-x_i|}
\end{equation}
Therefore, it suffices to consider delta functions both for the implementation of soft sources, as well as including time dependent point sources.

\section{Spatial discretization}
\label{sec:Fast}

Having considered the homogeneous and source terms, we are now prepared to present a fully discrete numerical solution, defined by discretizing in space the update equation \eqref{eqn:Integral_Solution_Full}. Upon full discretization, computing $I_j \equiv I[u](x_j)$ according to \eqref{eqn:Iu} results in a dense matrix-vector product, which requires $O(N^2)$ operations to compute, where $N$ is the number of spatial grid points. However we will show that by utilizing the recurrence relations for $I^L$ \eqref{eqn:IL_def}, and $I^R$ \eqref{eqn:IR_def}, the particular solution $I = I^L + I^R$ can be formed in $O(N)$ operations, by means of fast convolution. This turns out to be true, even in the case of unstructured grids.

We first discretize the domain $[a,b]$ into $N$ evenly spaced subintervals, of width $\dx = (b-a)/N$, and define $x_j = a+j\dx$ for $0\leq j \leq N$. Appealing to the recursive definition of the integral operator \eqref{eqn:I_Rec} evaluated at $x_j$ and with $\delta_L=\delta_R = \Delta x$, we make a change of variables $y = z\Delta x$, and find
\begin{align}
	\label{eqn:I_Rec_Disc}
	I_{j}	= e^{-\nu} \left(I^L_{j-1}+I^R_{j+1}\right)	+ \nu \int_{-1}^{1} u(x_{j}+z\Delta x) e^{-\nu |z|}dz,
\end{align}
where $I_j = I[u](x_j)$, and  the discrete integration parameter is
\begin{equation}
	\label{eqn:nudef}
	\nu = \alpha\Delta x = \frac{\beta\Delta x}{c\Delta t}.
\end{equation}
We now replace $u(x_{j}+ z\Delta x)$ with a polynomial interpolant over the subinterval corresponding to $-1\leq z \leq 1$. We consider polynomials $p_{j}^{(m)}(z)$ of degree $m$, which use successively larger stencils involving the points $u_{j\pm k}$. The recursive updates for $I^L$ and $I^R$ become
\begin{align}
	\label{eqn:IL_Rec_Disc}
	I^L_{j}	=& e^{-\nu} I^L_{j-1}	+ J^L_j, \quad J^L_j = \nu \int_{0}^{1} u(x_{j}-z\Delta x) e^{-\nu z}dz, \\
	\label{eqn:IR_Rec_Disc}
	I^R_{j}	=& e^{-\nu} I^R_{j+1}+ J^R_j, \quad J^R_j = \nu \int_{0}^{1} u(x_{j}+z\Delta x) e^{-\nu z}dz.
\end{align}
These expressions are still exact, but the remaining integrals must be approximated using quadrature.

				\subsection{Compact Simpson's Rule}
Motivated by theorem \ref{Consistency_thm}, we now perform quadrature using second order polynomials $p^{(2)}_j$. To be precise, $u(x_j+z\Delta x)$ is approximated by
\begin{equation}
	\label{eqn:p2_def}
	p_j^{(2)}(z) = (1-z)u_{j} + z u_{j+1} + \left(\frac{z^2-z}{2}\right)\Delta x^2 u''(\xi_j), 
\end{equation}
where
\[
	\Delta x^2 u''(\xi_j) =
	\begin{cases}
		2u_{0}-5u_1+4u_{2}-u_3,				\quad &j = 0 \\
		u_{j+1}-2u_j+u_{j-1},				\quad &1 \leq j \leq N-1 \\
		2u_{N}-5u_{N-1}+4u_{N-2}-u_{N-3},		\quad &j = N,
	\end{cases}
\]
so that
\[
	u(x_j+z\Delta x) - p_j^{(2)}(z) = O(\Delta x^3).
\]
Now, we replace $u(x_j+z\Delta x)$ with $p_j^2(z)$ in $J^R_j$ of equation \eqref{eqn:IR_Rec_Disc}, and integrate the result analytically. By symmetry, the corresponding polynomial approximation for $u(x_j-z\Delta x)$ is made in $J^L_j$ of equation \eqref{eqn:IL_Rec_Disc}. Making use of Lemma \ref{Exp_Int}, we find
\begin{align}
	\label{eqn:IL_update}
	J^L_j  = Pu_j + Qu_{j-1}+R \Delta x^2 u''(\xi_j) \\
	\label{eqn:IR_update}
	J^R_j = Pu_j + Qu_{j+1}+R \Delta x^2 u''(\xi_j)
\end{align}
where $I^L_0 = 0$ and $I^R_N = 0$ by definition, and where the coefficients are
\begin{align}
	\label{eqn:ddef}
	d =& e^{-\nu} \\
	P =&  E_0(\nu)-E_1(\nu) = 1-\frac{1-d}{\nu} \nonumber \\
	Q =&  E_1(\nu)= -d+\frac{1-d}{\nu} \nonumber \\
	R =&  \frac{E_2(\nu)-E_1(\nu)}{2} = \frac{1-d}{\nu^2} - \frac{1+d}{2\nu}. \nonumber
\end{align}
We refer to this method as the compact Simpson's rule, to indicate that a 3-point stencil is used in each of the integrals in $J^L_j$ and $J^R_j$, which are only one computational cell in length. Notice that the only difference between $J^L_j$ and $J^R_j$ is the appearance of $u_{j\pm 1}$ multiplied by the coefficient $Q$, according to the direction of integration. We also emphasize that $I^L_j$ is updated for increasing $j$, and $I^R_j$ for decreasing $j$.

It now remains to show that the presented fully discrete scheme satisfies the consistency condition \eqref{eqn:Consistency_Disc}. First, we combine $I^L_j$ and $I^R_j$ to write the fully discrete update \eqref{eqn:I_Rec_Disc}, which upon simplification of the coefficients \eqref{eqn:ddef} becomes
\begin{align*}
	I_j	=& d(I^L_{j-1}+I^R_{j+1}) + 2Pu_j +Q(u_{j+1}+u_{j-1})+2R(u_{j+1}-2u_j+u_{j-1}) \\
		=& e^{-\nu}(I^L_{j-1}+I^R_{j+1}) + 2(1-e^{-\nu})u_j +\frac{2}{\nu^2}\left(1-\left(1+\nu+\frac{\nu^2}{2}\right)e^{-\nu}\right)(u_{j+1}-2u_j+u_{j-1}).
\end{align*}
If from this expression we solve for the quantity $I_j - 2u_j$ and multiply by $\alpha^2$ (recall, $\nu = \alpha \Delta x$) we have
\[
	\alpha^2(I_j-2u_j) =  2\frac{u_{j+1}-2u_j+u_{j-1}}{\Delta x^2}+O\left((\alpha \Delta x)^2e^{-\alpha \Delta x}\right), \quad \alpha\Delta x \gg 1
\]
and letting $\alpha \to \infty$ produces the second order finite difference approximation at $x_j$, confirming the consistency of the scheme.

\begin{remark}
	A similar approach could be used to formulate higher order quadrature formulae. In this case, several different stencils for $\Delta x^p u^{(p)}(\xi_j)$ would be required near the boundaries. Development and implementation of consistent quadrature rules is a topic of future work.
\end{remark}

				\subsection{Unstructured meshes}	\label{sec:unstructured}
We now consider unstructured meshes. More generally, define the partition of $[a,b]$ by the subintervals $[x_{j-1},x_j]$, where $x_0 = a$, $x_N = b$, and $x_{j-1}<x_j$. Likewise, the analogous discrete parameters become
\begin{align}
	h_j =& x_j - x_{j-1}, \quad \nu_j = \alpha h_j, \quad d_j = e^{-\nu_j} \\
	P_j =& 1-\frac{1-d_j}{\nu_j} \nonumber \\
	Q_j =& -d_j+\frac{1-d_j}{\nu_j} \nonumber \\
	R_j =& \frac{1}{2\nu_j^2}(2(1-d_j)-\nu_j(1+d_j)). \nonumber
\end{align}
Making use of the recurrence relations \eqref{eqn:IL_def} with $\delta_L = h_j$, and \eqref{eqn:IR_def} with $\delta_R = h_{j+1}$, we obtain
\begin{align}
	\label{eqn:ILj}
	I^L_j  =& d_j I^L_{j -1} + J^L_j, \quad J^L_j =\nu_j \int_0^1 u(x_j-z h_j) e^{-\nu_j z} dz \\
	\label{eqn:IRj}
	I^R_j =& d_{j+1} I^R_{j+1}+ J^R_j, \quad J^R_j = \nu_{j+1} \int_0^1 u(x_j+z h_{j+1}) e^{-\nu_{j+1} z} dz.
\end{align}
It only remains to construct the polynomial interpolants $p^{(2)}_j(z)$. However, the integration domain is still $0\leq z \leq 1$, and so $p^2_j(z)$ written in the form \eqref{eqn:p2_def} is still valid, as long as we replace $\Delta x$ with $h_j$, and modify stencils to accurately approximate the second derivative at $x_j$. Since this can be done in $O(N)$ operations, the scheme is still fast, and the discretization of $J^L_j$ and $J^R_j$ found in equations \eqref{eqn:IL_update} and \eqref{eqn:IR_update} respectively still apply, so long as the coefficients are replaced with their $j-$dependent counterparts.

\subsection{1D domain decomposition}
\label{sec:DD}
In this section we propose an efficient parallel algorithm for performing the 1D convolution \eqref{eqn:Iu} on a decomposed domain, with the appropriate boundary conditions. If $N$ is the total number of grid points in the domain, and $M$ the number of processors, then both the number of floating point operations and the amount of memory storage scale as $O(N\!+\!M)$. Once the convolution is computed, the solution update \eqref{eqn:Integral_Solution_Full} is completely local and does not require any further communication between the subdomains.

\begin{figure}[htbp]
	\begin{center}
	\includegraphics[width = \textwidth]{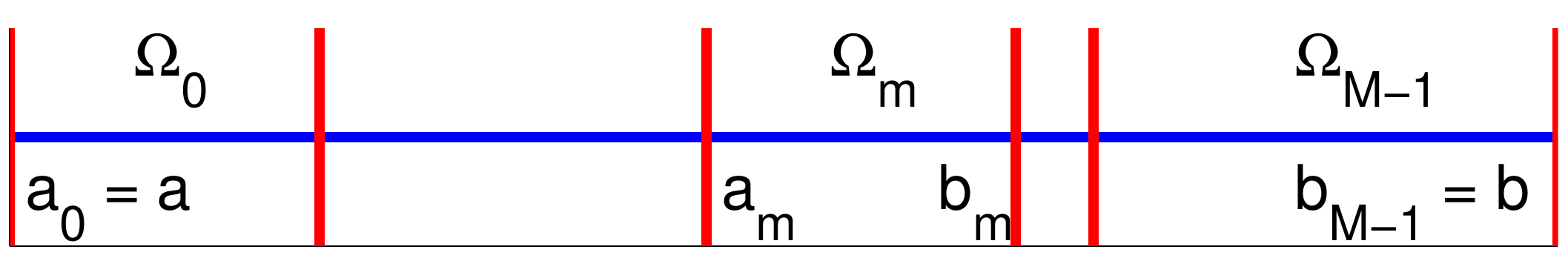}
	\caption{Decomposition of a 1D domain}
	\label{fig:Domain_Decomposition}
	\end{center}
\end{figure}

Consider a 1D domain $\Omega = [a,b]$, and decompose it into $M$ subdomains  $\Omega_m = [a_m,b_m]$ of generic sizes ($m = 0,1,\dots,M\!-\!1$),  as shown in Figure \ref{fig:Domain_Decomposition}. Appealing to the form of the integral solution with transmission coefficients \eqref{eqn:Transmission}, we see that for $x\in \Omega_m$, the integral solution can be written as
\[
	I[u](x) =  I_m[u](x) + A_m e^{-\alpha (x-a_m)} + B_m e^{-\alpha(b_m-x)},  \quad  x \in \Omega_m,
\]
where the "local particular solution" is
\[
	I_m[u](x) = \alpha \int\limits_{\Omega_m} u(y) e^{-\alpha |x-y|}dy, \quad
\]
and the local homogeneous coefficients are
\[
	A_m = \int_a^{a_m} e^{-\alpha(a_m-y)}u(y)dy, \quad B_m = \int_{b_m}^b e^{-\alpha(y-b_m)}u(y)dy.
\]
We point out here that if $m=0$ or $M-1$, then the coefficient $A_m$ or $B_m$ respectively are used to enforce boundary conditions, rather than transmission conditions. Now, suppose that $u^n(x)$ from the subdomain $\Omega_m$ is copied into local memory on machine $m$, and the integral solution \eqref{eqn:Integral_Solution_Full} is to be computed there, only for $x \in \Omega_m$. Then, the contributions from the total domain $\Omega$, including the boundary conditions, is determined by the scalar values $A_m$ and $B_m$. Additionally, the only information that must be passed to other domains $\Omega_k$ are the scalar values $I_m[u](a_m)$ and $I_m[u](b_m)$.

The idea that underlies the following algorithm is the analogy between a decomposed 1D domain, and a (perhaps non-uniform)  "coarse" mesh with $M$ cells: the interfaces between each subdomain $\Omega_m$ are interpreted as mesh nodes $X_m$ (with $m = 0,1,\dots,M$), where
\begin{equation*}
	X_0 = a_0, \qquad
	X_m = a_m = b_{m-1} \quad{\text{for $m=1,2,\dots M\!-\!1$}}, \qquad
	X_M = b_{M-1}.
\end{equation*}
Then, the global solution on the coarse mesh is decomposed into
\[
	I[u](X_m) = I^L[u](X_m) + I^R[u](X_m),
\]
with characteristics given by
\begin{align}
	I^L[u](X_m) &= e^{-\nu_j}I^L[u](X_{m-1}) + I_m[u](X_m), \\
	I^R[u](X_m) &= e^{-\nu_{m+1}}I^R[u](X_{m+1})+ I_{m+1}[u](X_m),
\end{align}
and where $\nu_m = \alpha(X_m-X_{m-1})$. Thus, global particular solution is computed only on the coarse mesh, and is comprised of the scalar values from the local particular solutions, evaluated at the endpoints of their respective domains. Once the global particular solution is computed, the boundary conditions can be applied, and the total integral solution will be known at each $X_m$. Finally, the local homogeneous coefficients $A_m$ and $B_m$ are obtained by 
\[
	A_m = I^L[u](X_m), \quad B_m = I^R[u](X_{m+1}).
\]
We now have sufficient background for describing algorithm 1 below.

\begin{figure}[htbp]
	\begin{center}
	\subfigure[Fine-coarse communication]{	\includegraphics[width = .47\textwidth]{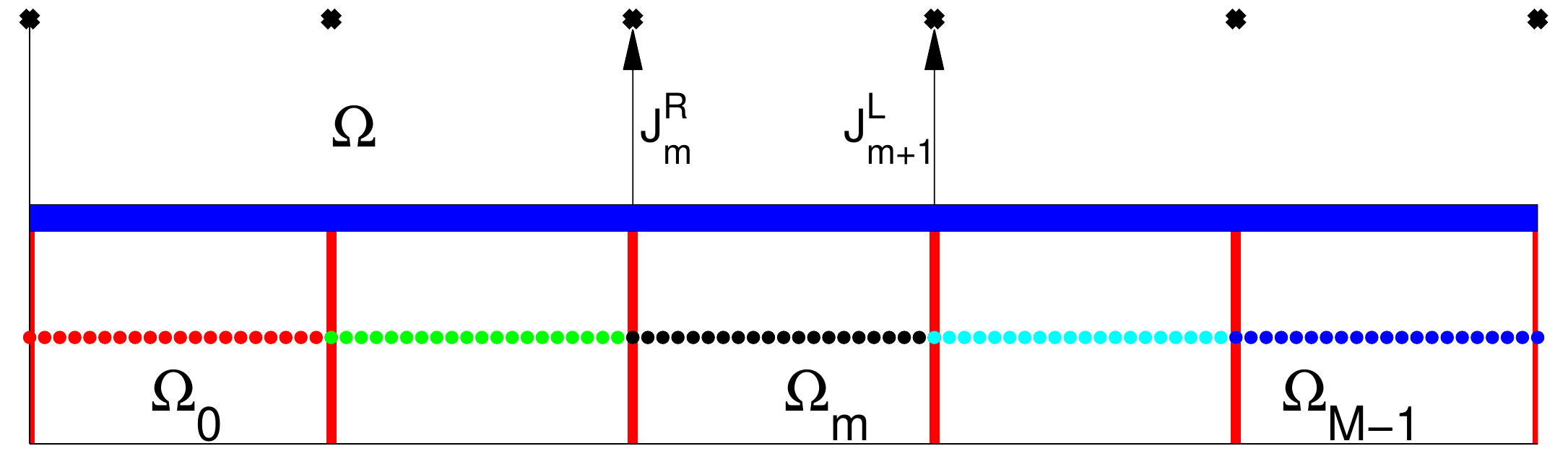}	\label{fig:Domain_Decomposition_2}}
	\subfigure[Coarse-fine communicatoin]{	\includegraphics[width = .47\textwidth]{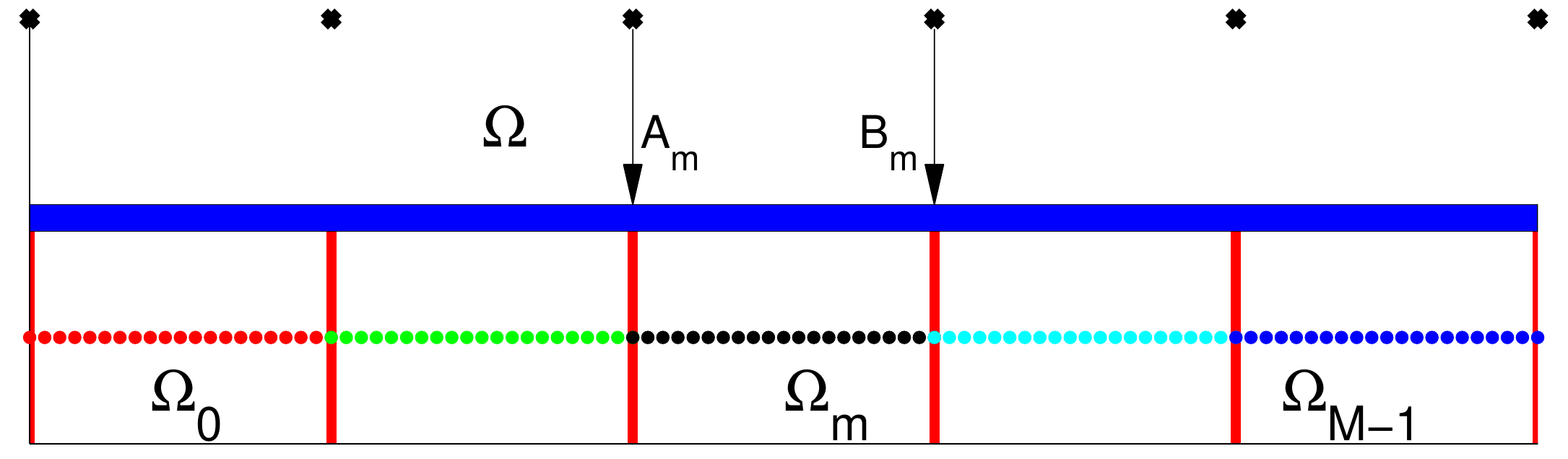}	\label{fig:Domain_Decomposition_3}}
	\caption{In algorithm 1, the "local particular solutions" $I_m[u](x)$ are found for each $\Omega_m$ and the values $J^L_{m+1} \equiv I_m[u](b_m)$ and $J^R_m\equiv I_m[u](a_m)$ are passed to the coarse grid (a). Once the "global particular solution" and boundary correction is computed on the coarse grid, the transmission coefficients $A_m$ and $B_m$ are passed to $\Omega_m$ (b).}
	\end{center}
\end{figure}

\begin{algorithm}[!htp]
\caption{Fast 1D domain decomposition}

\begin{enumerate}[itemsep=2mm]

\item On each subdomain $\Omega_m = [a_m,b_m]$, compute the ``local particular 
	solution'' 
	\[
		I_m[u^n](x) = \alpha \int_{a_m}^{b_m} e^{-\alpha |x-y|} u^n(y) dy,         \qquad x\in\Omega_m
	\]
	by means of the fast convolution algorithm, i.e. equations \eqref{eqn:IL_Rec_Disc} and \eqref{eqn:IR_Rec_Disc}; 

\item Pass the local values $J^L_{m+1} \equiv I_m[u](b_m)$ and $J^R_m \equiv I_m[u](a_m)$ to the coarse grid (\emph{communication});

\item Compute the contributions from the left and right characteristics on the coarse grid, by means of the recursive relations
	\begin{alignat*}{3}
	& I^L_0 = 0, \qquad 
	&				& I^L_m = e^{-\nu_{m-1}}I^L_{m-1}  + J^L_{m} \quad
	&				& \text{for $m = 1,2,\dots,M$, and} \\
	& I^R_{M} = 0, \qquad
	&				& I^R_m = e^{-\nu_{m}}I^R_{m+1} + J^R_{m} \quad
	&				& \text{for $m = M\!-\!1,M\!-\!2,\dots,0$},
	\end{alignat*}
	where $\nu_j = \alpha(b_{j}-a_j)$ for $j=0,1,\dots,M\!-\!1$;

\item Using the "global particular solution" at the endpoints $I[u](a)=I^R_0$ and $I[u](b)=I^L_{M-1}$, find the global coefficients $A$ and $B$ in accordance with the update equation \eqref{eqn:Integral_Solution_Full}, by imposing the required boundary conditions at $x = a$ and $b$;

\item Compute the local coefficients to contain the global boundary data:
	\begin{alignat*}{3}
	& A_0 = A, \qquad 
	&				& A_m =  I^L_m + A_{m-1} e^{-\nu_{m-1}} \quad
	&				& \text{for $m = 1,2,\dots,M\!-\!1$, and} \\
	& B_{M-1} = B, \qquad
	&				& B_m = I^R_m + B_{m+1} e^{-\nu_{m}} \quad
	&				& \text{for $m = M\!-\!2,M\!-\!3,\dots,0$};
	\end{alignat*}

\item Send $A_m$ and $B_m$ to the corresponding subdomain $\Omega_m$ (\emph{communication});

\item On each subdomain $\Omega_m = [a_m,b_m]$, construct the local integral solution, by summing the particular and homogeneous solutions
      \[ w_{m}[u](x) = I_m[u](x) + A_m e^{-\alpha(x-a_m)} + B_m e^{-\alpha(b_m-x)},          \qquad x\in\Omega_m. \]

\end{enumerate}
\end{algorithm}

\section{Higher spatial dimensions: ADI splitting }
\label{sec:ADI}

The full numerical algorithm in higher dimensions is based on the initial boundary value problem
\begin{align}
	\nabla^2 u - \frac{1}{c^2}\frac{\partial^2 u}{\partial t^2} &= -S(r,t), \quad  r \in \Omega, \quad t>0 \label{eqn:prob2} \\
	u(r,0) &= f(r), \quad r \in \Omega \nonumber \\
	u_t(r,0) &= g(r), \quad r \in \Omega \nonumber \\
	u(r,t) &= h(r,t), \quad r \in \partial\Omega, \quad t>0. \nonumber
\end{align}
We again utilize MOL$^T$ to perform the temporal discretization. Next we employ ADI splitting to the modified Helmholtz operator
\begin{align*}
	\frac{1}{\alpha^2}\nabla^2 -1 &= \frac{1}{\alpha^2}\left(\frac{\partial^2}{\partial x^2}+\frac{\partial^2}{\partial y^2}+\frac{\partial^2}{\partial z^2}\right) - 1 \\
						&= \left(\frac{1}{\alpha^2}\frac{\partial^2}{\partial x^2} - 1\right) \left(\frac{1}{\alpha^2}\frac{\partial^2}{\partial y^2} - 1\right) \left(\frac{1}{\alpha^2}\frac{\partial^2}{\partial z^2} - 1\right) \\
							& \quad +\frac{1}{\alpha^4}\left(\frac{\partial^4}{\partial x^2\partial y^2}+\frac{\partial^4}{\partial x^2\partial z^2}+\frac{\partial^4}{\partial y^2\partial z^2}\right) -\frac{1}{\alpha^6}\frac{\partial^6}{\partial x^2\partial y^2\partial z^2}
\end{align*}
Since $1/\alpha^2 = (c\Delta t)^2/\beta^2$, we have that
\[
	\frac{1}{\alpha^2}\nabla^2-1 = \mathcal{L}_{\beta,x}\mathcal{L}_{\beta,y}\mathcal{L}_{\beta,z} + O((c\Delta t)^4),
\]
where $\mathcal{L}_{\beta,x}$, $\mathcal{L}_{\beta,y}$, and $\mathcal{L}_{\beta,z}$ are one-dimensional modified Helmholtz operators \eqref{eqn:HelmholtzL}, applied in the indicated spatial variable. The local truncation error is fourth order, meaning that the scheme is second order accurate. The corresponding two dimensional operator is
\[
	\frac{1}{\alpha^2}\nabla^2 - 1 = -\mathcal{L}_{\beta,x}\mathcal{L}_{\beta,y} + O((c\Delta t)^4),
\]
and upon omitting the splitting error we arrive at the 2D PDE
\begin{equation}
	\label{eqn:Helmholtz_Equation_2D}
	\mathcal{L}_{\beta,x} [\mathcal{L}_{\beta,y}[ u^{n+1}+u^{n-1}+(\beta^2-2)u^n  ]](x,y) = \beta^2 u^n(x,y) + (c\Delta t)^2 S^n(x,y),
\end{equation}
analogous to the 1D modified Helmholtz equation \eqref{eqn:Helmholtz_Equation}.

\subsection{Boundary corrections in higher dimensions}
The solution of equation \eqref{eqn:Helmholtz_Equation_2D} will require a consistent implementation of boundary conditions, and inclusion of sources. Define the temporary variables $W(x,y)$ and $Z(x,y)$ such that
\begin{align}
	\label{eqn:z_def}
	\mathcal{L}_{\beta,x}[W] =&  \beta^2 u^n + (c\Delta t)^2 S^n, \quad \mathcal{L}_{\beta,y}[Z] =  W.
\end{align}
Thus, the solution $u^{n+1}$ is achieved in two one-dimensional solves, by performing an $x\!-\!y$ sweep
\begin{align}
	\label{eqn:w_Solve}
	W =& \mathcal{L}_{\beta,x}^{-1}[\beta^2 u^n + (c\Delta t)^2 S^n], \\
	\label{eqn:u_Solve}
	u^{n+1}  =& \mathcal{L}_{\beta,y}^{-1}[W]-u^{n-1}-(\beta^2-2)u^{n}.
\end{align}
The intermediate variable $W(x,y)$ is a boundary integral solution, defined by the 1D result \eqref{eqn:L_Inverse}, where $y$ is a treated as a fixed parameter. The homogeneous solution will also exhibit $y-$dependence through the coefficients $A^n = A^n(y)$ and $B^n = B^n(y)$. For a general domain $\Omega$, the lines $y = y_j$ will intersect with the boundary $\partial \Omega$ at different points $x = a_j, b_j$. Thus, equation \eqref{eqn:w_Solve} is solved by applying boundary conditions at $x = a_j$ and $b_j$.

Likewise, for fixed $x$, the $y-$sweep is performed by constructing 1D boundary integral solution $Z$; and finally we solve for $u^{n+1}$ according to \eqref{eqn:u_Solve}. The boundary conditions are applied at $y = c_k, d_k$, which are defined by the intersection of the boundary $\partial \Omega$ with the line $x = x_k$.

Discretization of $\Omega$ is accomplished by embedding it in a regular Cartesian mesh, and additionally incorporating the termination points of the $x$ and $y$ lines, which will always lie on the boundary, and be within $\Delta x$ (or $\Delta y$, respectively) of the nearest grid point inside $\Omega$. For example, the lines and boundary points for a circle are shown in Figure \ref{fig:sweep}. Since the one-dimensional quadrature formula can incorporate unstructured meshes locally without incurring time step restrictions, it is of no concern to have the boundary lie arbitrarily close to a mesh point. That is, we increase the accuracy without affecting stability by including the boundary points. The implementation of boundary conditions requires knowledge of the temporary variable $Z(x,y)$ (and perhaps its derivatives) at the endpoints of each $x$ line. The general approach for implementation of the ADI scheme is as follows:

\begin{enumerate}
	\item
	Lay a mesh over the domain $\Omega$. For each horizontal line $y=y_j$, identify the boundary points $x = a_j$ and $b_j$, where $a_j$ and $b_j$ are the points of intersection of $y = y_j$ with the boundary $\partial \Omega.$ Likewise, identify for each vertical line $x = x_k$ the boundary points $y = c_k$ and $d_k$, which are the intersection of $x = x_k$ with the boundary $\partial \Omega.$ A line object is then identified as all regularly spaced points along an $x$ ($y$) line, including it's boundary points. Assume the number of $x$ and $y$ lines are $N_x$ and $N_y$ respectively.
	\item
	First perform the $x$-sweep. At each time step, construct the temporary variable $W = W(x,y_j)$ defined by \eqref{eqn:w_Solve}, for $1\leq j \leq N_y$. The boundary conditions are imposed at $x = a_j$ and $b_j$.
	 \item
	 Next, perform the $y$-sweep. For $1\leq k \leq N_x$, solve for the variable $Z = u^{n+1}+2u^n+(\beta^2-2)u^{n-1}$, according to \eqref{eqn:u_Solve}. The boundary conditions are now applied at $y = c_k$ and $d_k$.
	\item
	In order to improve the accuracy of the ADI solve, the inversion of the $x$ and $y$ Helmholtz operators is symmetrized, by averaging the results of $x\!-\!y$ and $y\!-\!x$ solves.
\end{enumerate}
The same approach is followed in three dimensions, where the lines are defined by fixing two variables, and similarly finding the endpoints which lie on the boundary $\partial \Omega$ along each line. Symmetrization of the ADI sweeps becomes more tedious, as six different orderings of the ADI sweeps should be taken, and averaged accordingly. However this process still requires $O(N)$ operations.
\begin{figure}[htbp]
	\centering
	\subfigure[$x$ sweep]{\label{fig:sweep-a} \includegraphics[width=.45\textwidth]{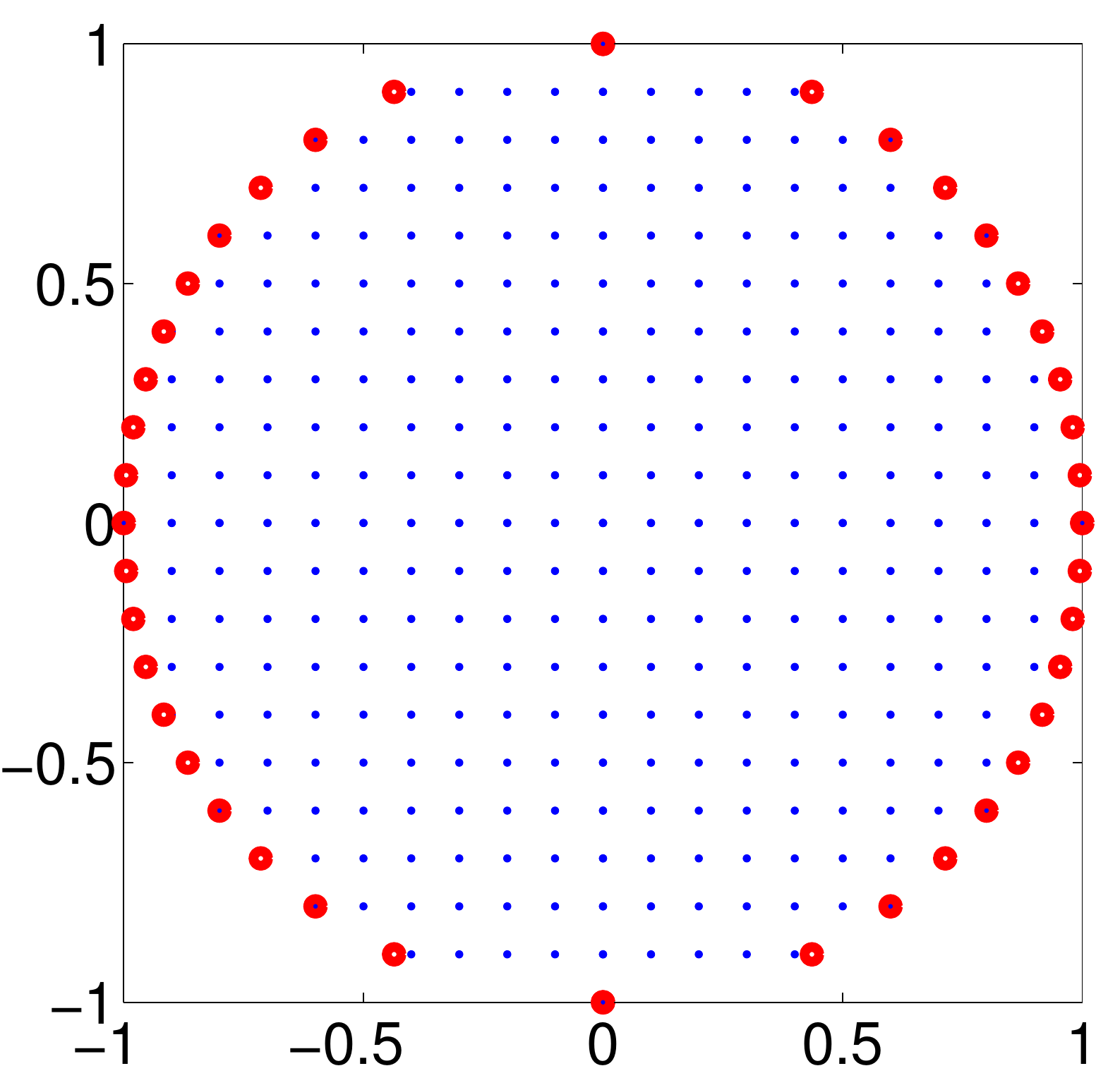}}
	\subfigure[$y$ sweep]{\label{fig:sweep-b}\includegraphics[width=.45\textwidth]{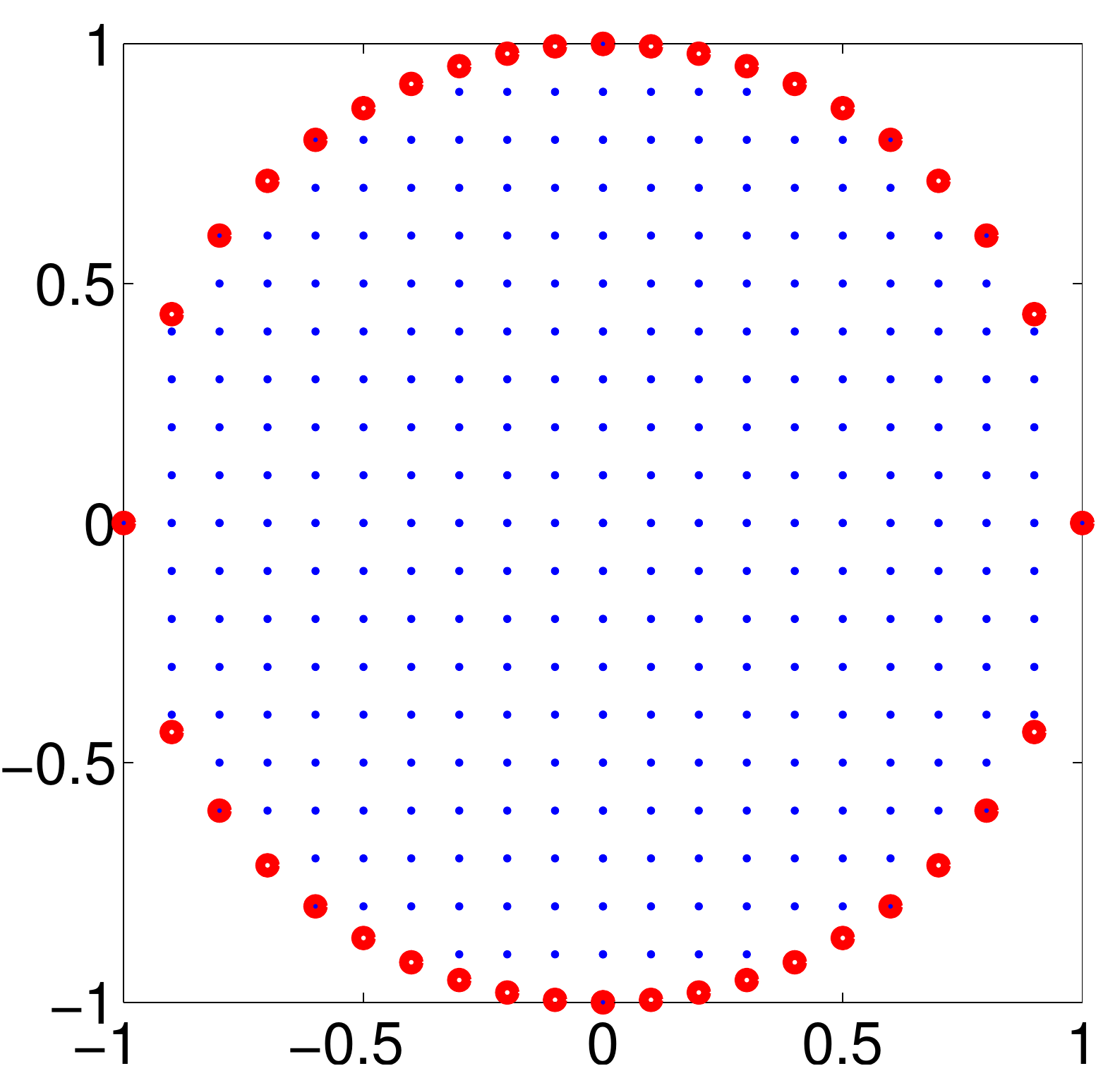}} 
	\caption{Grids used for the ADI $x$ (a) and $y$ (b) sweeps defined in equations \eqref{eqn:w_Solve} and \eqref{eqn:u_Solve} respectively. The red dots are the intersections of the mesh lines with the boundary, and need not be equally spaced.}
	\label{fig:sweep}%
\end{figure}

\section{Numerical Results}
\label{sec:results}

We now demonstrate the ability of our solver to address problems of interest. We first demonstrate the expected second order convergence in both space and time of the algorithm, including the implementation of outflow boundary conditions, and domain decomposition. We next illustrate several two dimensional examples, in non-Cartesian geometries.

\subsection{A one-dimensional example}
We first study the errors produced in 1D by our numerical scheme, as well as those independently produced by our domain decomposition algorithm, and outflow boundary condition implementation. To do this, we construct three separate numerical solutions with the same initial conditions, but with or without domain decomposition, and with or without outflow boundary conditions. The procedure is as follows.

A spatial domain $\Omega = [a,b] \in \mathbb{R}$ is first decomposed into 4 subdomains $\Omega_m = [a_m,b_m],$ for $m=0,1,2,3$. A compactly supported pulse is propagated through the domain, utilizing domain decomposition and imposing outflow boundary conditions, up to time $T = (b-a)/c$, for which the pulse is sure to have left the domain. We also compute independently the numerical solution over $\Omega$, without domain decomposition (but still imposing outflow boundary conditions), so that the difference between the two can be used to measure the error due solely to domain decomposition. A third solution is also constructed, this time over the larger domain $\Omega_{ext} = [a-cT, b+cT]$, that uses neither the domain decomposition nor outflow boundary conditions. The third solution differs from the second only by the numerical reflections caused by outflow for $x \in \Omega$. So in this way, we can independently measure the total discretization errors, 

Since the exact solution is known, we can measure the total discretization error, as well as the numerical reflections due independently to the domain decomposition, and outflow boundary conditions.

The initial pulse we use is a Gaussian, given by
\[
	u(x,0) = \exp\left(-36\left(\frac{2x-b-a}{b-a}\right)^2\right), \quad \text{and} \quad u_t(x,0) = 0.
\]
The subdomains are discretized using regularly spaced mesh points, as well as Chebyshev mesh points, and also using a varied number of spatial points, so that
\[
	x_j = \begin{cases}
	a_0 + (b_0-a_0)\frac{j}{N}, 					\quad &x_j \in \Omega_0 \\
	a_1 + (b_1-a_1)\cos\left(\frac{j\pi}{2N}\right), 		\quad &x_j \in \Omega_1 \\
	a_2 + (b_2-a_2)\frac{j}{2N}, 					\quad &x_j \in \Omega_2 \\
	a_3 + (b_3-a_3)\cos\left(\frac{j\pi}{N}\right), 		\quad &x_j \in \Omega_3.
	\end{cases}
\]
Thus, the full domain $\Omega$ is discretized with a total of $6N+1$ unique spatial points, where $N$ is the number of spatial points in $\Omega_0$. The results are displayed in Tables \ref{tab:refinement_1D_1} - \ref{tab:refinement_1D_3}, for increasing $N$, holding the CFL number fixed. For all simulations, $a = -1$, $b = 1$ the propagation speed is $c = 1$, and the final time is $T = 2$. The CFL number is determined using the maximum mesh spacing on the grid, which for $N = 20$, corresponds to $\Delta x = 0.025$. The CFL numbers are set to $0.1$, $1$ and $10$ in Tables \ref{tab:refinement_1D_1}, \ref{tab:refinement_1D_2} and \ref{tab:refinement_1D_3}, respectively. Second order convergence is observed in each case.

\begin{table}[htbp]
\begin{centering}
\begin{tabular}{|c||c|c||c|c||c|c|}
\hline 
		& \multicolumn{2}{c||}{Domain Decomposition}	& \multicolumn{2}{c||}{Outflow }				& \multicolumn{2}{c|}{Total Discretization}			\\ \hline
$N$		& 	$L^{2}$ error			& $L^{2}$ order& 	$L^{2}$ error			& $L^{2}$ order& 	$L^{2}$ error			& $L^{2}$ order	\\ \hline
$20$		& $\num{0.00479840211}$	& $-$		& $\num{0.00125949765}$	& $-$		& $\num{0.00576216479}$	& $-$	 		\\ \hline
$40$		& $\num{0.00071827454}$	& $2.7399$	& $\num{0.00028492261}$	& $2.1442$	& $\num{0.00140166844}$	& $2.0395$		\\ \hline
$80$		& $\num{0.00009441879}$	& $2.9274$	& $\num{0.00006883611}$	& $2.0493$	& $\num{0.00034557239}$	& $2.0201$		\\ \hline
$120$	& $\num{0.00001220944}$	& $2.9511$	& $\num{0.00001704921}$	& $2.0135$	&$\num{0.00008587556}$	& $2.0087$		\\ \hline
$240$	& $\num{0.00000164072}$	& $2.8956$	& $\num{0.00000425213}$	& $2.0034$	& $\num{0.00002141292}$	& $2.0038$		\\ \hline
$480$	& $\num{0.00000025192}$	& $2.7033$	& $\num{0.00000106236}$	& $2.0009$	& $\num{0.00000534691}$	& $2.0017$		\\ \hline
\end{tabular}
\caption{Refinement study for a 1D Gaussian pulse, demonstrating the convergence of domain decomposition, and outflow boundary conditions. Note that $N$ is the number of spatial points in the first subdomain $\Omega_0$, so the total number of discretization points is $6N+1$. The CFL number is held fixed at $0.1$.}
\label{tab:refinement_1D_1}
\end{centering}
\end{table}

\begin{table}[htbp]
\begin{centering}
\begin{tabular}{|c||c|c||c|c||c|c|}
\hline 
		& \multicolumn{2}{c||}{Domain Decomposition}	& \multicolumn{2}{c||}{Outflow }				& \multicolumn{2}{c|}{Total Discretization}			\\ \hline
$N$		& 	$L^{2}$ error			& $L^{2}$ order& 	$L^{2}$ error			& $L^{2}$ order& 	$L^{2}$ error			& $L^{2}$ order	\\ \hline
$20$		& $\num{0.00197225255}$	& $-$		& $\num{0.00082802594}$	& $-$		& $\num{0.01351638748}$	& $-$ 			\\ \hline
$40$		& $\num{0.00027765931}$	& $2.8285$	& $\num{0.00021952383}$	& $1.9153$	& $\num{0.00347936511}$	& $1.9578$		\\ \hline
$80$		& $\num{0.00003941848}$	& $2.8164$	& $\num{0.00005718893}$	& $1.9406$	& $\num{0.00087845203}$	& $1.9858$		\\ \hline
$120$	& $\num{0.00000594997}$	& $2.7279$	& $\num{0.00001437569}$	& $1.9921$	& $\num{0.00022054626}$	& $1.9939$		\\ \hline
$240$	& $\num{0.00000101772}$	& $2.5475$	& $\num{0.00000359667}$	& $1.9989$	& $\num{0.00005524280}$	& $1.9972$		\\ \hline
$480$	& $\num{0.00000020255}$	& $2.3290$	& $\num{0.00000089927}$	& $1.9998$	& $\num{0.00001382338}$	& $1.9987$		\\ \hline
\end{tabular}
\caption{Results are the same as for Table \ref{tab:refinement_1D_1}, but with the CFL number fixed at 1.}
\label{tab:refinement_1D_2}
\end{centering}
\end{table}

\begin{table}[htbp]
\begin{centering}
\begin{tabular}{|c||c|c||c|c||c|c|}
\hline 
		& \multicolumn{2}{c||}{Domain Decomposition}	& \multicolumn{2}{c||}{Outflow }				& \multicolumn{2}{c|}{Total Discretization}			\\ \hline
$N$		& 	$L^{2}$ error			& $L^{2}$ order& 	$L^{2}$ error			& $L^{2}$ order& 	$L^{2}$ error			& $L^{2}$ order	\\ \hline
$20$		& $\num{0.00103867308}$	& $-$		& $\num{0.02023754735}$	& $-$		& $ 2.4948 \times 10^{-1}$	& $-$			\\ \hline
$40$		& $\num{0.00003081871}$	& $5.0748$	& $\num{0.01730526450}$	& $0.2258$	& $ 1.4802 \times 10^{-1}$	& $0.7531$		\\ \hline
$80$		& $\num{0.00000180867}$	& $4.0908$	& $\num{0.00483219724}$	& $1.8405$	& $ 5.8893 \times 10^{-2}$	& $1.3296$		\\ \hline
$120$	& $\num{0.00000013251}$	& $3.7707$	& $\num{0.00121302111}$	& $1.9941$	& $\num{0.01720171253}$	& $17756$		\\ \hline
$240$	& $\num{0.00000001894}$	& $2.8066$	& $\num{0.00038659365}$	& $1.6497$	& $\num{0.00448533081}$	& $1.9393$		\\ \hline
$480$	& $\num{0.00000000371}$	& $2.3518$	& $\num{0.00010247849}$	& $1.9155$	& $\num{0.00113647811}$	& $1.9806$		\\ \hline
\end{tabular}
\caption{Results are the same as for Table \ref{tab:refinement_1D_1}, but with the CFL number fixed at 10.}
\label{tab:refinement_1D_3}
\end{centering}
\end{table}

\subsection{Two-dimensional examples}

\subsubsection{Rectangular Cavity With Domain Decomposition}

In this section, we demonstrate the second order convergence of our proposed method, including domain decomposition, for a simple rectangular cavity problem with homogeneous Dirichlet and Neumann boundary conditions. A rectangular domain $\Omega = [-L_{x}/2,L_{x}/2]\times[-L_{y}/2,L_{y}/2]$ is divided into four subdomains with new artificial boundaries along $x=0$ and $y=0$, as in Figure \ref{fig:RC_DD}. Due to the Cartesian geometry of this example, the domain decomposition algorithm we use follows directly from the 1D algorithm we have presented. A more general approach will be required on complex subdomains is, the subject of future investigation.

\begin{figure}[H]
\centering
\begin{tikzpicture}[scale=2,>=stealth]

\draw[->] (-0.8,-0.8) -- (-0.6,-0.8)
	node[right] {$x$};
\draw[->] (-0.8,-0.8) -- (-0.8,-0.6)
	node[above]{$y$};

\draw (-1,-1)--(-1,1)--(1,1)--(1,-1)--cycle;

\draw[dashed] (-1,0)--(1,0);
\draw[dashed] (0,-1)--(0,1);

\draw (1,0) node[anchor=west] {$y=0$};
\draw (0,1) node[anchor=south] {$x=0$};

\end{tikzpicture}
\caption{Rectangular cavity with domain decomposition.}
\label{fig:RC_DD}
\end{figure}
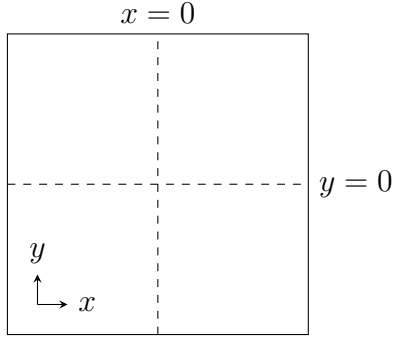

As initial conditions, we choose
\[
	u\left(x,y,0\right)  = 
	\begin{cases}
	\cos\left(\frac{(2m+1)\pi x}{L_{x}}\right)\cos\left(\frac{(2n+1)\pi y}{L_{y}}\right)	& \text{Dirichlet case} \\
	\sin\left(\frac{(2m+1)\pi x}{L_{x}}\right)\sin\left(\frac{(2n+1)\pi y}{L_{y}}\right)	& \text{Neumann case}
\end{cases}
\]
and
\[
	u_{t}\left(x,y,0\right)=0
\]
for $(x,y) \in \Omega$, $m$ and $n$ integers. Exact solutions are well-known in each case. The results of refinement studies are listed in Tables \ref{tab:refinement_RCD} and \ref{tab:refinement_RCN}. The error is the maximum  discrete $L^{2}$ error (computed against the exact solution) over all time steps.

\begin{table}[htbp]
\begin{centering}
\begin{tabular}{|c||c|c||c|c||c|c|}
\hline 
 		& \multicolumn{2}{c||}{CFL 0.5} 			& \multicolumn{2}{c||}{CFL 2} 				& \multicolumn{2}{c|}{CFL 10}			 \\ \hline
$\Delta x$	& $L^{2}$ error 			& $L^{2}$ order& $L^{2}$ error 		& $L^{2}$ order	& $L^{2}$ error & $L^{2}$ order		\\ \hline
$1/40$ 	& $\num{0.0008480082}$ 	& $-$		& $\num{0.00589191}$	& $-$			& $\num{0.106922697}$ & $-$			\\ \hline
$1/80$ 	& $\num{0.0001901801}$ 	& $2.15671$	& $\num{0.00145775}$	& $2.01499$		& $\num{0.033923646}$ & $1.65620$	\\ \hline
$1/160$ 	& $\num{0.0000448109}$ 	& $2.08544$	& $\num{0.00036061}$	& $2.01523$		& $\num{0.008868315}$ & $1.93555$	\\ \hline
$1/320$ 	& $\num{0.0000108611}$ 	& $2.04468$	& $\num{0.00008955}$	& $2.00965$		& $\num{0.002207988}$ & $2.00592$	\\ \hline
$1/640$ 	& $\num{0.0000026726}$ 	& $2.02284$ 	& $\num{0.00002230}$	& $2.00535$		& $\num{0.000546378}$ & $2.01476$ 	\\ \hline
\end{tabular}
\caption{Refinement study for rectangular cavity with Dirichlet BC using domain decomposition. Here, $c=1$, $m=n=0$, and $L_{x}=L_{y}=1$.}
\label{tab:refinement_RCD}
\end{centering}
\end{table}

\begin{table}[htbp]
\begin{centering}
\begin{tabular}{|c||c|c||c|c||c|c|} \hline 
		& \multicolumn{2}{c||}{CFL 0.5} 			& \multicolumn{2}{c||}{CFL 2}				& \multicolumn{2}{c|}{CFL 10}			\\ \hline
$\Delta x$	& $L^{2}$ error 			& $L^{2}$ order& $L^{2}$ error				& $L^{2}$ order& $L^{2}$ error & $L^{2}$ order		\\ \hline
$1/40$	& $\num{0.000889276}$		& $-$		& $\num{0.00618621}$		& $-$		& $\num{0.1114727}$ & $-$			\\ \hline
$1/80$	& $\num{0.000194862}$		& $2.1902$	& $\num{0.00149418}$		& $2.0497$	& $\num{0.0347712}$ & $1.6807$		\\ \hline
$1/160$	& $\num{0.0000453669}$		& $2.1027$	& $\num{0.00036511}$		& $2.0329$	& $\num{0.0089791}$ & $1.9532$		\\ \hline
$1/320$	& $\num{0.0000109286}$		& $2.0535$	& $\num{0.00009011}$		& $2.0185$	& $\num{0.0022217}$ & $2.0148$		\\ \hline
$1/640$	& $\num{0.0000026809}$		& $2.0273$	& $\num{0.00002237}$		& $2.0098$	& $\num{0.0005480}$ & $2.0192$		\\ \hline
\end{tabular}
\caption{Refinement study for rectangular cavity with Neumann BC using domain decomposition. Here, $c=1$, $m=n=0$, and $L_{x}=L_{y}=1$.}
\label{tab:refinement_RCN}
\end{centering}
\end{table}

\subsubsection{Double Circle Cavity}

In this example, we solve the wave equation with homogeneous Dirichlet boundary conditions on a 2D domain $\Omega$ which is, as in Figure \ref{fig:dblcirc_geom}, the union of two overlapping disks, with centers $P_{1}=\left(- \gamma,0\right)$ and $P_{2}=\left(\gamma,0\right)$, respectively, and each with radius $R$:
\begin{equation} \nonumber
\Omega =  \left\{\left(x,y\right) : |\left(x,y\right)-P_{1}| < R \right\}\cup \left\{\left(x,y\right) : |\left(x,y\right)-P_{2}|<R \right\}
\end{equation}
\noindent where $|\left(x,y\right)| = \sqrt{x^{2}+y^{2}}$ is the usual Euclidean vector norm, and $\gamma < R$.

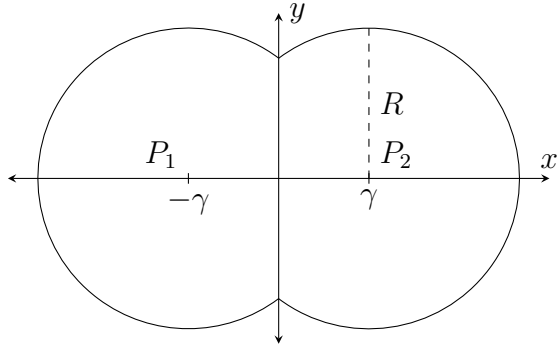
\begin{figure}[H]
\centering
\begin{tikzpicture}[scale=2,>=stealth]

\draw[<->] (-1.8,0) -- (1.8,0)
	node [above] {$x$};
\draw[<->] (0,-1.1) -- (0,1.1)
	node [right] {$y$};
\draw (0,-.8) arc (-126.9:126.9:1);
\draw (0,.8) arc (53.1:306.9:1);
\draw[dashed] (.6,0) -- (.6,1);
\draw (.6,.5) node[anchor=west] {$R$};

\foreach \x in {-.6,.6}
{
	\draw (\x,-1pt)--(\x,1pt);
}
	
\draw (.6,0) node[anchor=north] {$\gamma$};
\draw (-.6,0) node[anchor=north] {$-\gamma$};

\draw (.6,0) node[anchor=south west] {$P_{2}$};
\draw (-.6,0) node[anchor=south east] {$P_{1}$};

\end{tikzpicture}
\caption{Double circle geometry.}
\label{fig:dblcirc_geom}
\end{figure}
This geometry is of interest due to, for example, its similarity to that of the radio frequency (RF) cavities used in the design of linear particle accelerators, and presents numerical difficulties due to the curvature of, and presence of corners in, the boundary. Our method avoids the stair-step approximation used in typical finite difference methods to handle curved boundaries, which reduces accuracy to first order and may introduce spurious numerical diffraction.

\begin{figure}[ht!]
	\centering
	\subfigure[$t=0          $]{\includegraphics[trim={1cm 1cm 1cm 1cm},clip, width=0.3\textwidth]{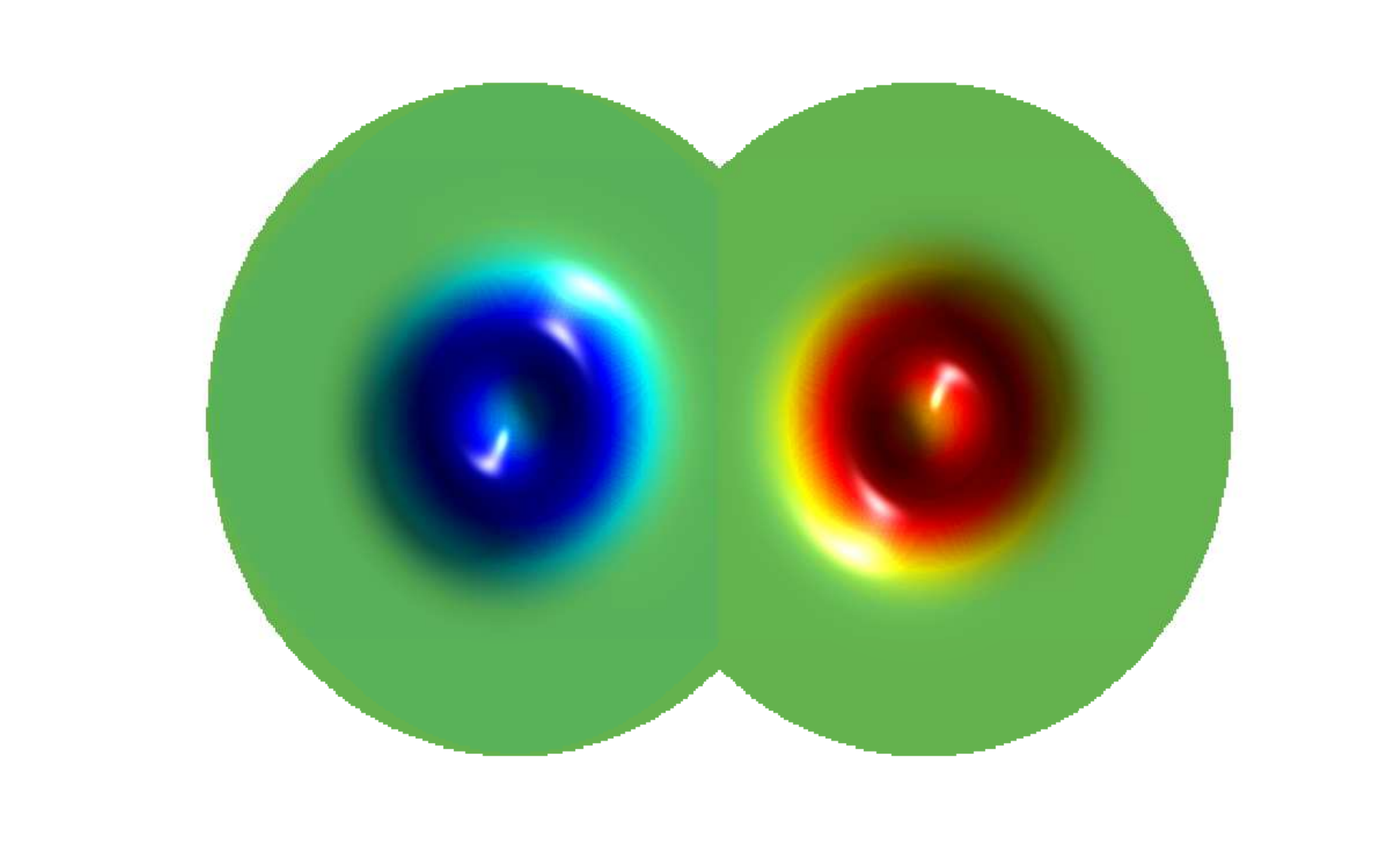}}
	\subfigure[$t=0.5145$]{\includegraphics[trim={1cm 1cm 1cm 1cm},clip, width=0.3\textwidth]{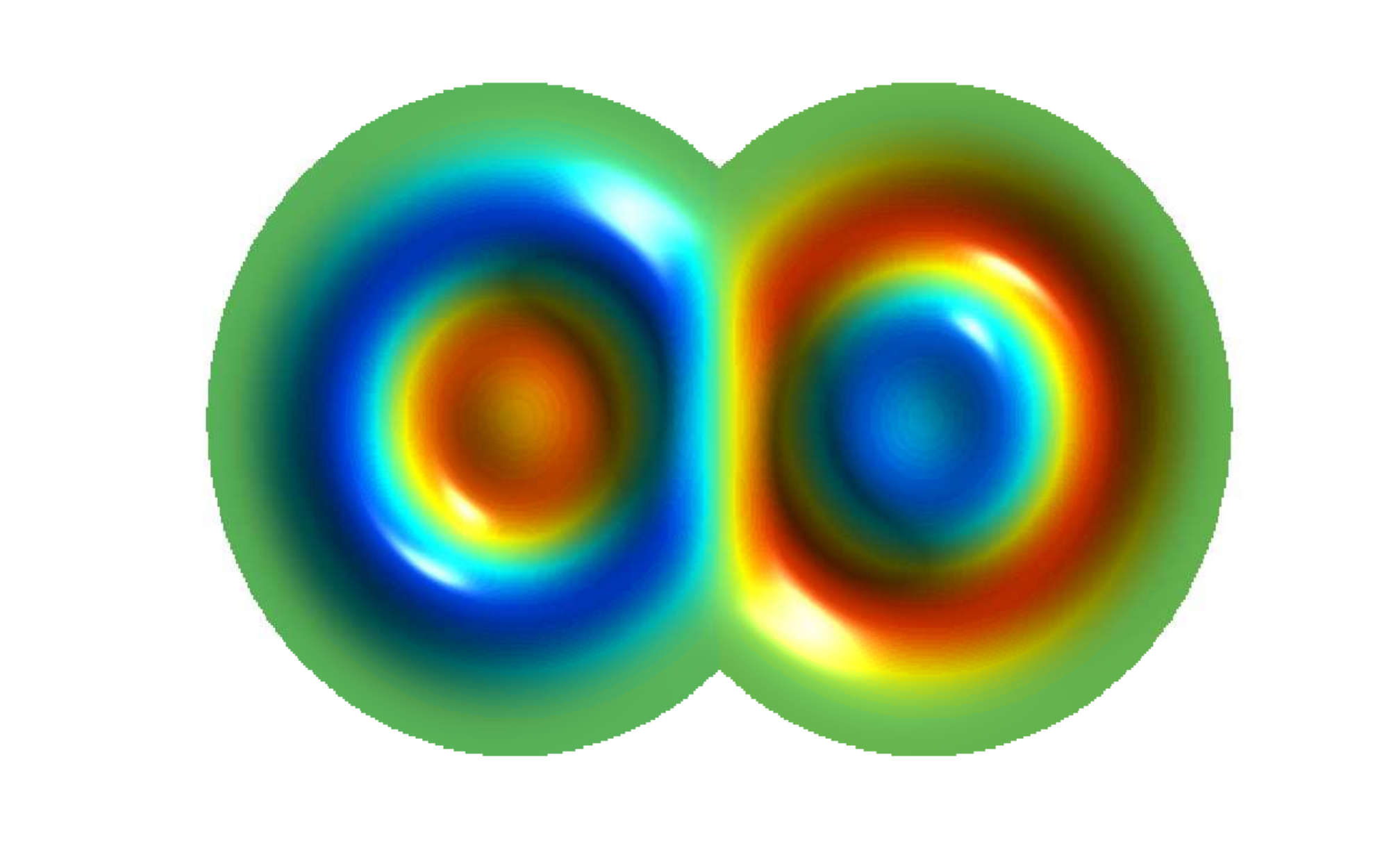}}
	\subfigure[$t=0.5145$]{\includegraphics[trim={1cm 1cm 1cm 1cm},clip, width=0.3\textwidth]{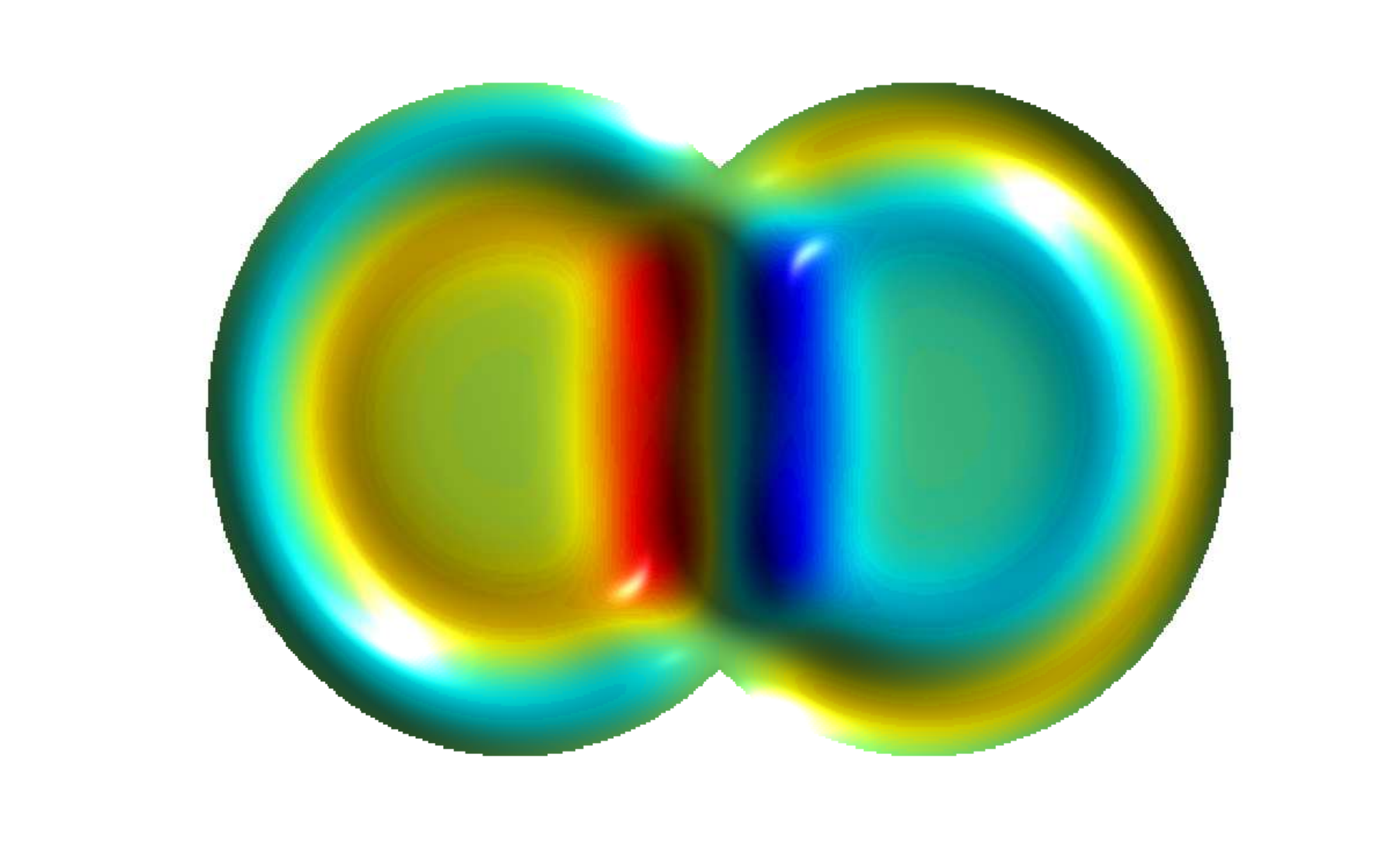}}
	\subfigure[$t=0.5145$]{\includegraphics[trim={1cm 1cm 1cm 1cm},clip, width=0.3\textwidth]{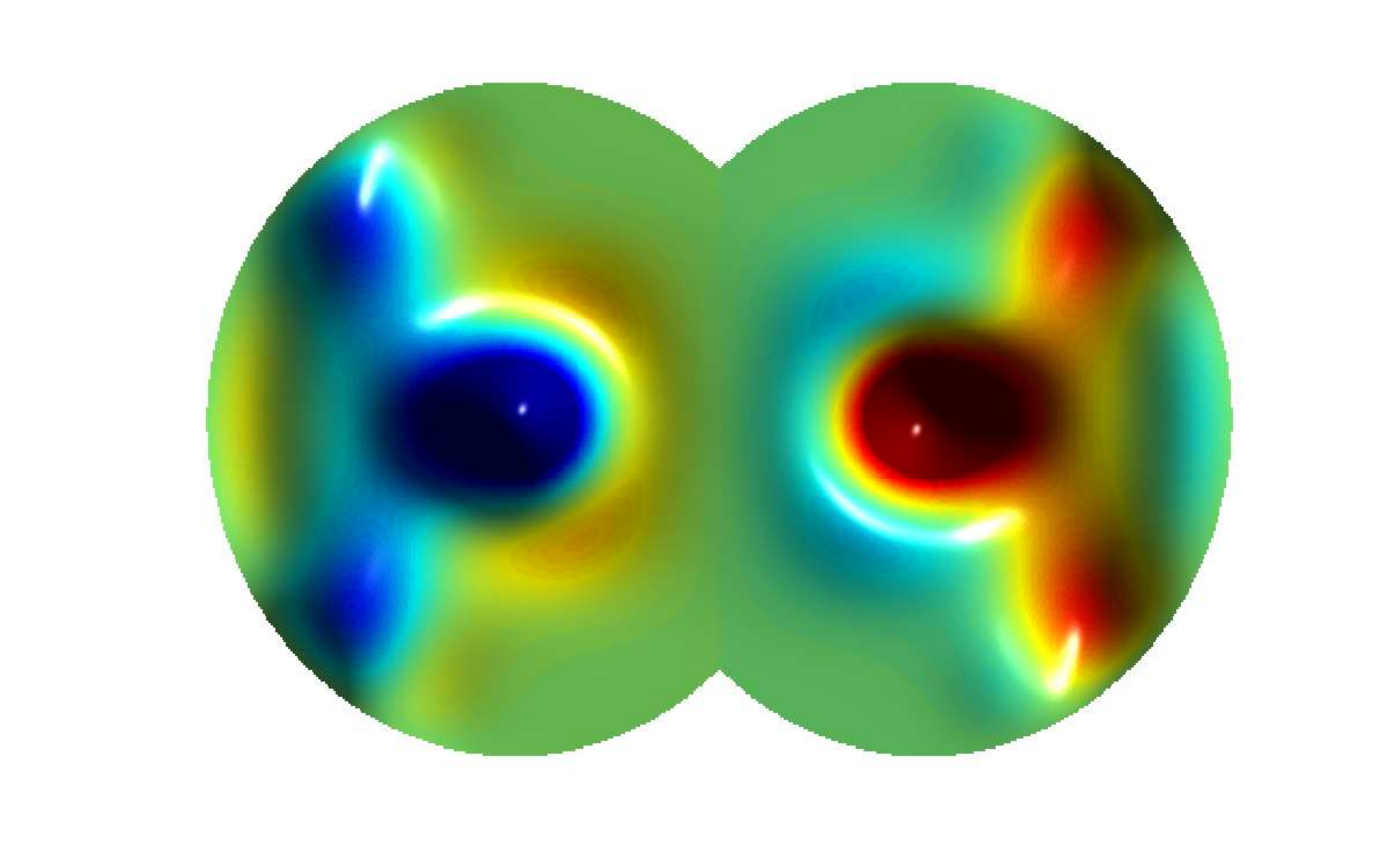}}
	\subfigure[$t=0.9135$]{\includegraphics[trim={1cm 1cm 1cm 1cm},clip, width=0.3\textwidth]{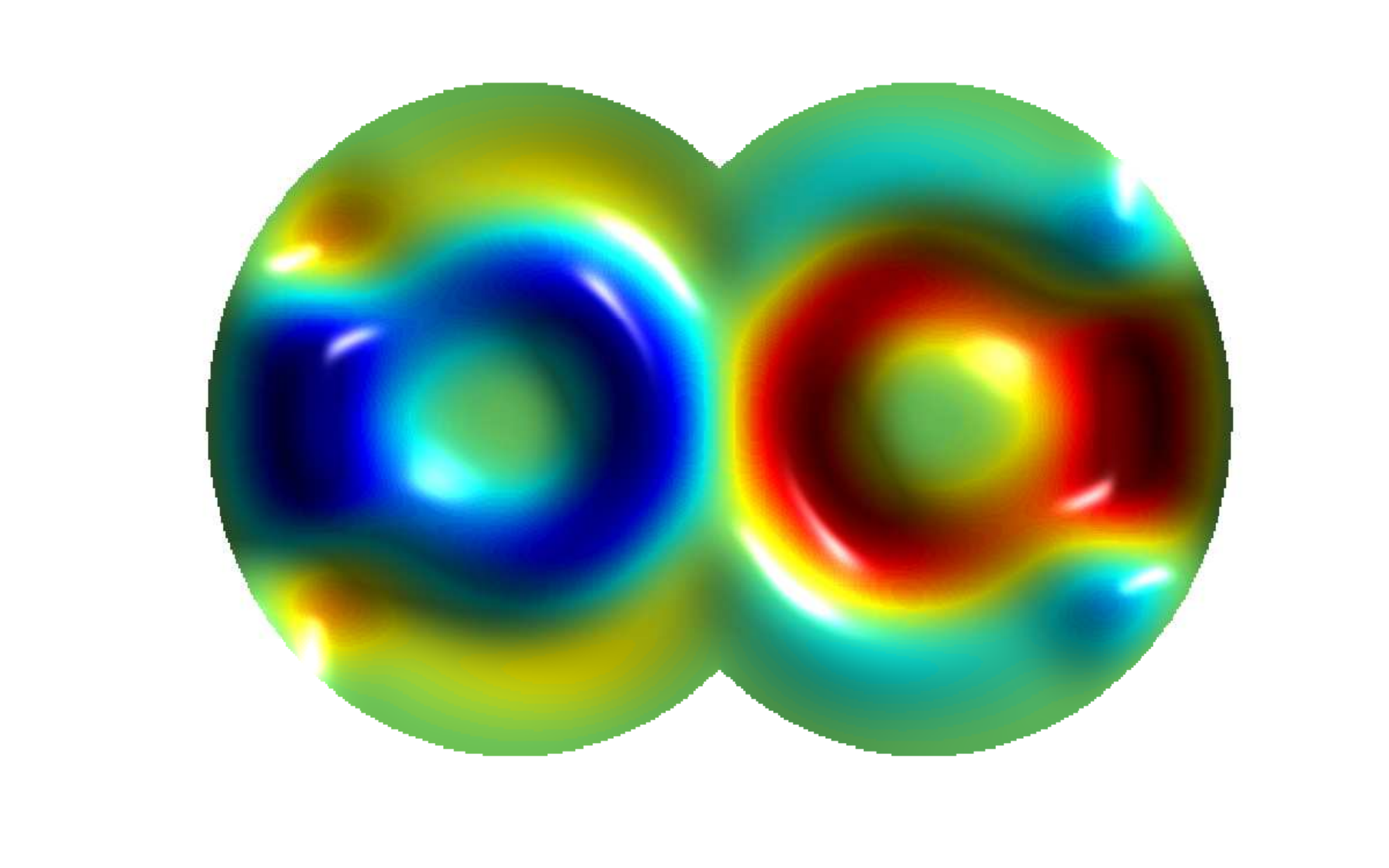}}
	\subfigure[$t=1           $]{\includegraphics[trim={1cm 1cm 1cm 1cm},clip, width=0.3\textwidth]{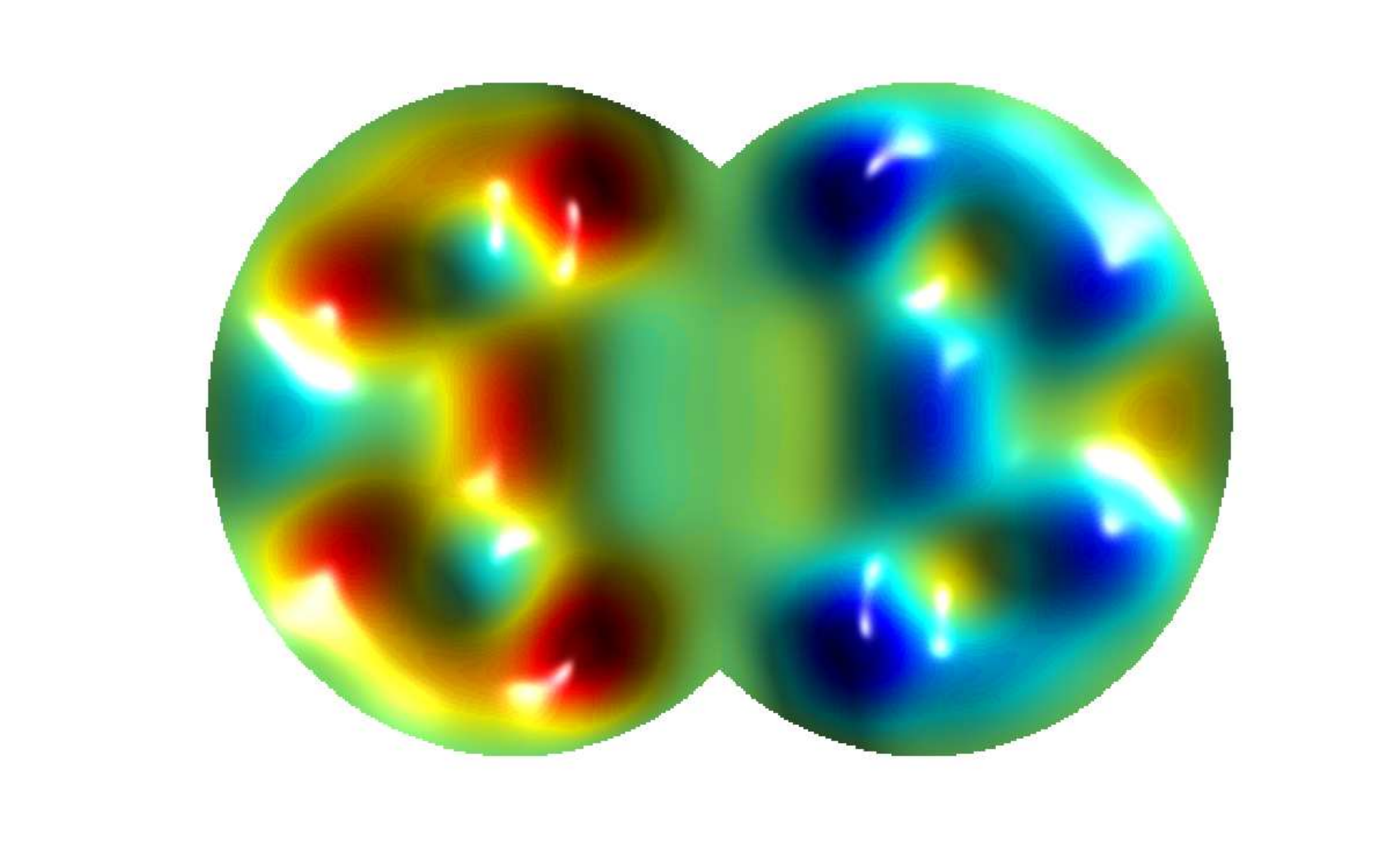}}
	\caption{Evolution of the double circle cavity problem.}
	\label{fig:dblcirc_plots}
\end{figure}

As initial conditions, we choose
\[
	u\left(x,y,0\right)  = 
	\begin{cases}
	-\cos^{6}\left(\frac{\pi}{2}\left(\frac{|(x,y)-P_{1}|}{0.8 \gamma}\right)^{2}\right)	& |\left(x,y\right)-P_{1}| < 0.8 \gamma \\
	\cos^{6}\left(\frac{\pi}{2}\left(\frac{|(x,y)-P_{2}|}{0.8 \gamma}\right)^{2}\right)	& |\left(x,y\right)-P_{2}| < 0.8 \gamma \\
	0 & \text{otherwise}
\end{cases}
\]
and
\[
	u_{t}\left(x,y,0\right)=0
\]
for $(x,y) \in \Omega$. Selected snapshots of the evolution are given in Figure \ref{fig:dblcirc_plots}, and the results of a refinement study are given in Table \ref{tab:refinement_dblcirc}. The discrete $L^{2}$ error was computed against a well-refined numerical reference solution ($\Delta x = \num{0.00021875}$); the error displayed in the table is the maximum over time steps with $t \in [0.28,0.29]$. For this example, $R=0.3$, $\gamma=0.2$, $c=1$, and the CFL is 2.

\begin{table}[htbp]
\begin{centering}
\begin{tabular}{|c|c|c||c|c|} \hline 
$\Delta x$  		& $\Delta y$ 			& $\Delta t$			& $L^{2}$ error			& $L^{2}$ order	\\ \hline
$\num{0.007}$		& $\num{0.0043333333}$	& $\num{0.008666667}$	& $\num{0.006143688}$	& $-$			\\ \hline
$\num{0.0035}$	& $\num{0.0021666667}$	& $\num{0.004333333}$	& $\num{0.001682923}$	& $1.8681$		\\ \hline
$\num{0.00175}$	& $\num{0.0010833333}$	& $\num{0.00216666}$	& $\num{0.000435945}$	& $1.9488$		\\ \hline
$\num{0.000875}$	& $\num{0.0005416667}$	& $\num{0.00108333}$	& $\num{0.000105150}$	& $2.0517$		\\ \hline
\end{tabular}
\caption{Refinement study for the double circle cavity with Dirichlet BC. For the numerical reference solution, $\Delta x = \num{.00021875}$, $\Delta y = \num{.00013542}$, and $\Delta t = \num{.00027083}$.}
\label{tab:refinement_dblcirc}
\end{centering}
\end{table}


\subsubsection{Symmetry on a Quarter Circle}

With the goal of testing the capabilities of our boundary conditions as well as circular geometry, we construct standing modes on a circular wave guide of radius $R$, in two different ways. First, we solve the Dirichlet problem, with initial conditions
\[
	u(x,y,0) = J_0\left(z_{20}\frac{r}{R} \right), \quad u_t(x,y,0) = 0,
\]
and exact solution $u = J_0\left(z_{20}\frac{r}{R} \right)\cos\left(z_{20}\frac{ct}{R}\right)$, where $J_0$ is the Bessel function of order 0, and $z_{20} = 5.5218$ is the $2$-nd zero. Secondly, we use the symmetry of the mode to construct the solution restricted to the second quadrant, with homogeneous Neumann boundary conditions taken along the $x$ and $y$ axes.

In both cases, the solution converges to second order. An overlay of the two are shown in Figure \ref{fig:Bessel_Quarter}, demonstrating the close agreement.

\begin{figure}[hb]
\centering
	\subfigure[$t = 0.25$]{\includegraphics[width = 0.24\textwidth]{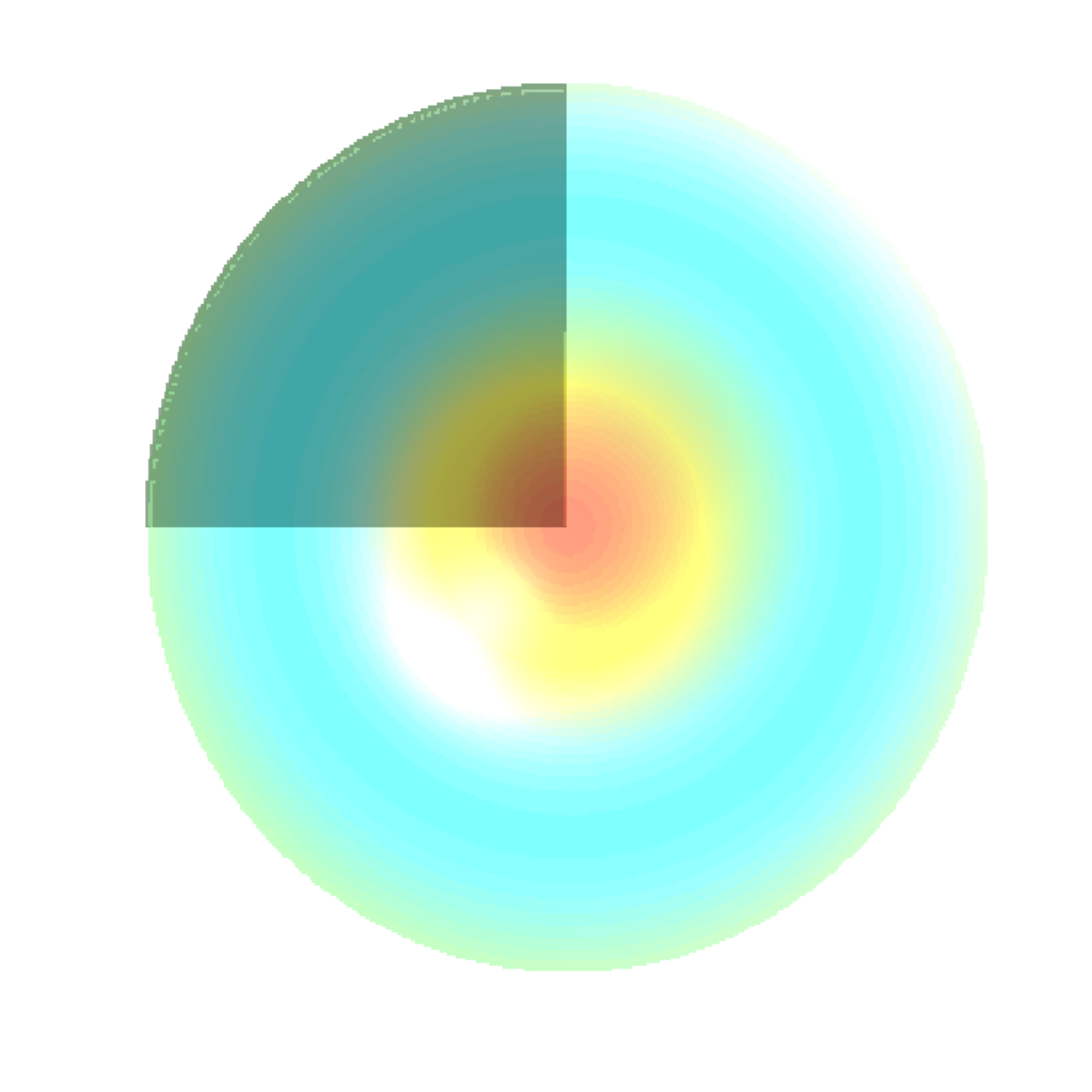}}
	\subfigure[$t = 0.50$]{\includegraphics[width = 0.24\textwidth]{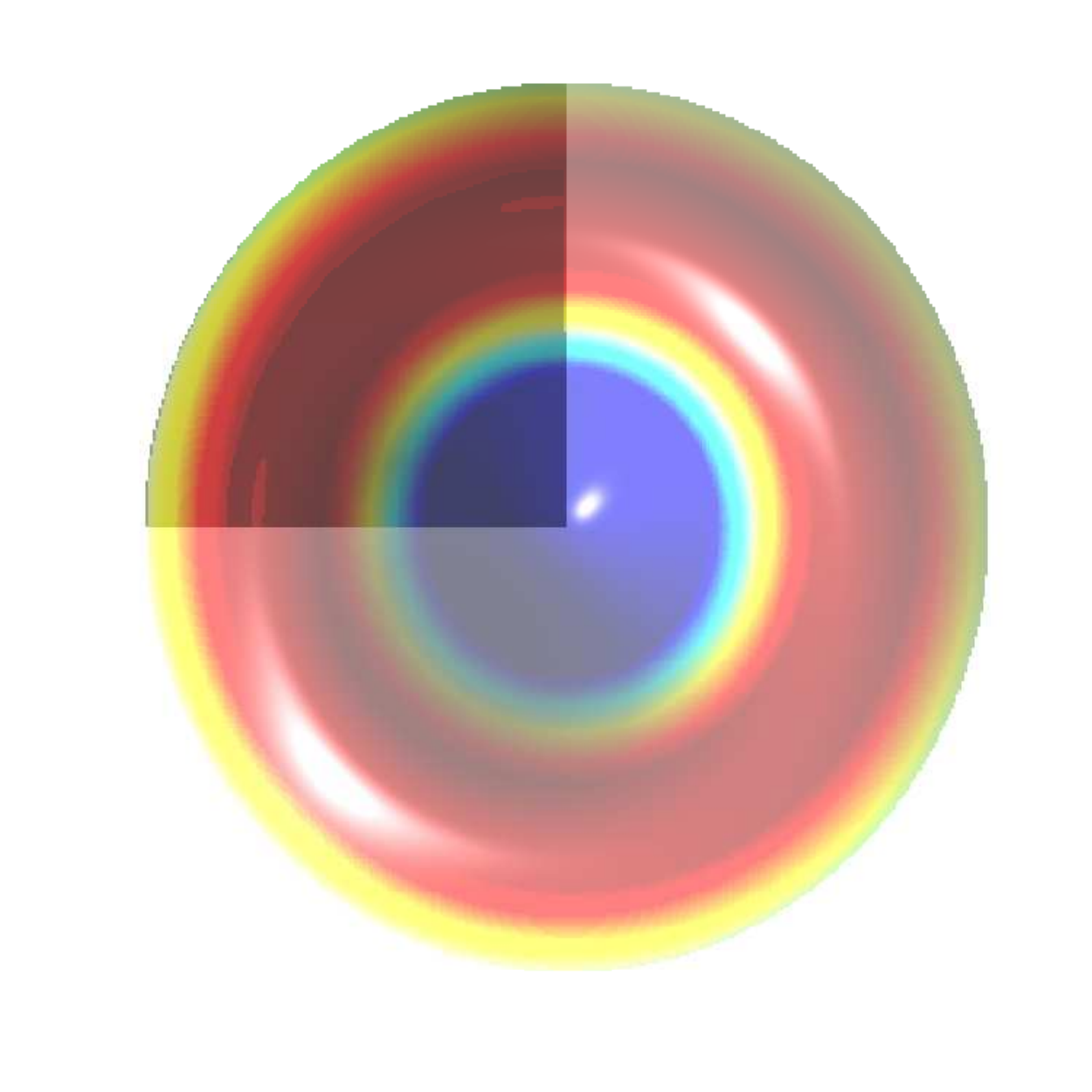}}
	\subfigure[$t = 0.75$]{\includegraphics[width = 0.24\textwidth]{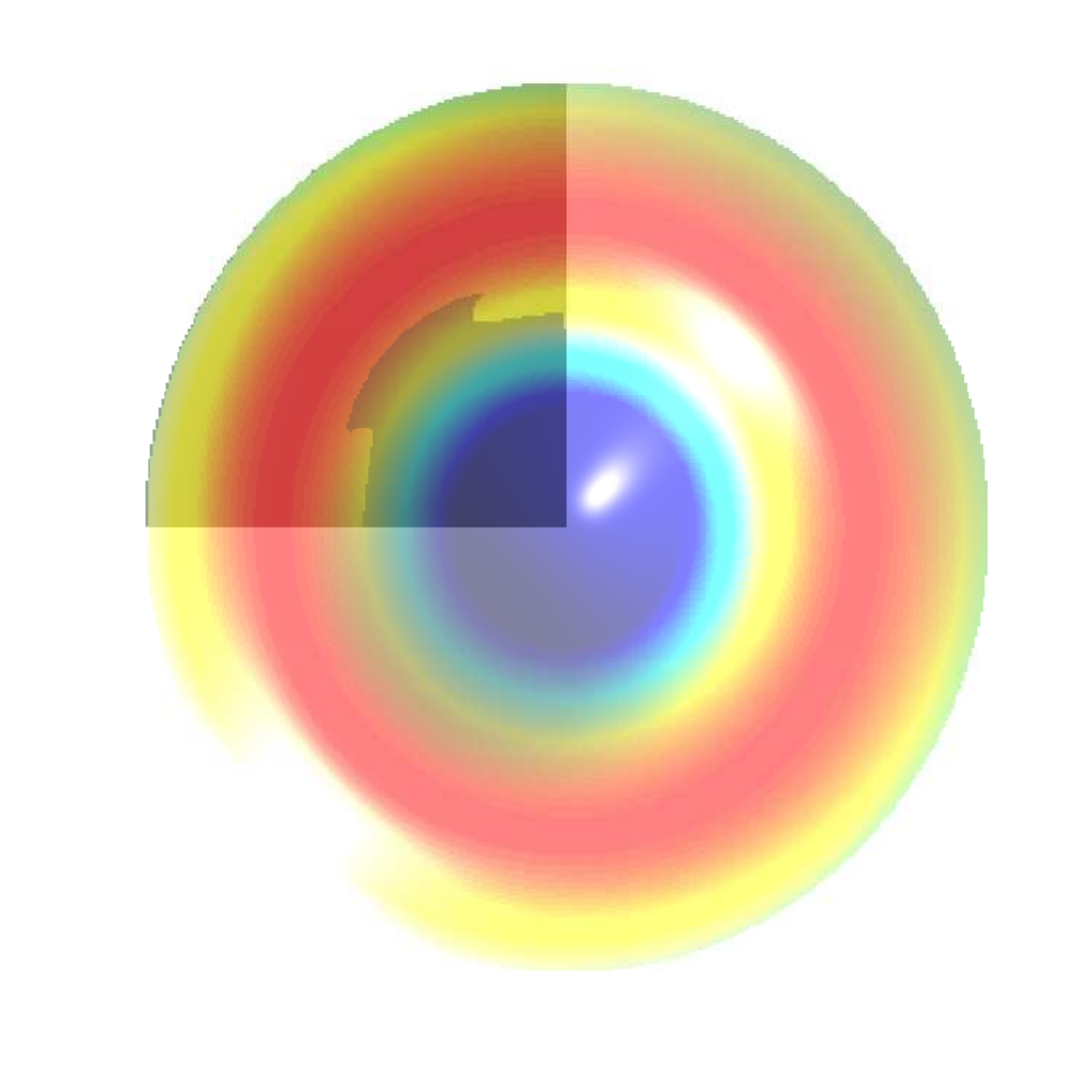}}
	\subfigure[$t = 1.00$]{\includegraphics[width = 0.24\textwidth]{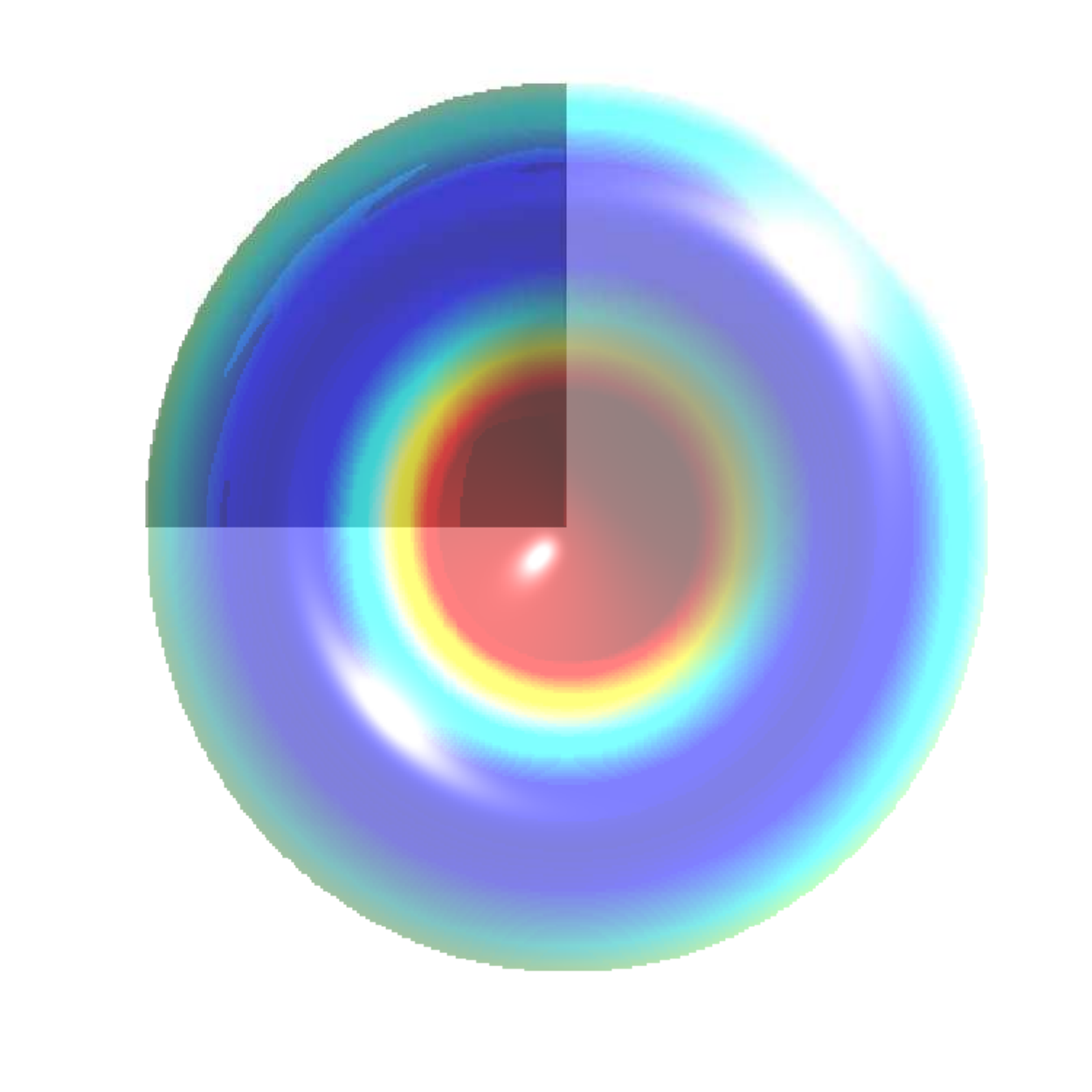}}
	\caption{Two separate numerical constructions of a Bessel mode are superimposed, demonstrating that the solution on the quarter circle using Neumann boundary conditions is equivalent to that of the full circle.}
	\label{fig:Bessel_Quarter}
\end{figure}

\subsubsection{Periodic Slit Diffraction Grating}

Diffraction gratings are periodic structures used in optics to separate different wavelengths of light, much like a prism. The high resolution that can be achieved with diffraction gratings makes them useful in spectroscopy, for example, in the determination of atomic and molecular spectra. In this example, we apply our method to model an infinite, periodic diffraction grating under an incident plane wave. The purpose of this example is to demonstrate the use of our method with multiple boundary conditions and nontrivial geometry in a single simulation to capture complex wave phenomena.

\begin{figure}[H]
\centering
\begin{tikzpicture}[scale=2,>=stealth]

\draw (-1,0)--(-0.2,0);
\draw (0.2,0)--(1,0);

\draw[->] (-0.8,-0.8) -- (-0.6,-0.8)
	node[right] {$x$};
\draw[->] (-0.8,-0.8) -- (-0.8,-0.6)
	node[above]{$y$};

\draw [decorate,decoration={brace,amplitude=5pt},xshift=0pt,yshift=1pt]
(-0.2,0) -- (0.2,0) node [black,midway,xshift=0,yshift=10pt] {$a$};
\draw [decorate,decoration={brace,amplitude=5pt},xshift=0pt,yshift=12pt]
(-1,0) -- (1,0) node [black,midway,xshift=0,yshift=15pt] {$d$};

\draw[->,decorate,decoration={snake,amplitude=2,post length=.08}] 		
	(0,-.7) -- (0,-.2)
	node [right,align=center,midway]
	{
	{$u_{\mbox{\scriptsize inc}}$}
	};

\draw[dashed] (-1,-1)--(-1,1)--(1,1)--(1,-1)--cycle;

\draw[anchor=north] (0,-1) node {Outflow BC};
\draw[anchor=south] (0,1) node {Outflow BC};
\draw[anchor=west] (1,0) node {Periodic BC};
\draw[anchor=east] (-1,0) node {Periodic BC};

\end{tikzpicture}
\caption{Periodic slit diffraction grating geometry}
\label{fig:slit_geom}
\end{figure}
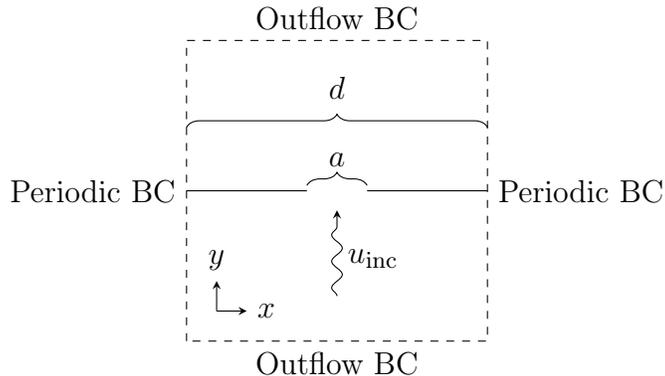


In the next example, we perform a preliminary test of outflow boundary conditions in higher dimensions. While a rigorous analysis of the algorithm is the subject of future work, the results look quite reasonable. Our numerical experiment is depicted in Figure \ref{fig:slit_geom}. An idealized slit diffraction grating consists of a reflecting screen of vanishing thickness, with open slits of aperture width $a$, spaced distance $d$ apart, measured from the end of one slit to the beginning of the next (that is, the periodicity of the grating is $d$).

\begin{figure}[htbp!]
	\centering
\subfigure[$t=0.31$]{\includegraphics[trim={1cm 1cm 1cm 1cm},clip, width = .24\textwidth]{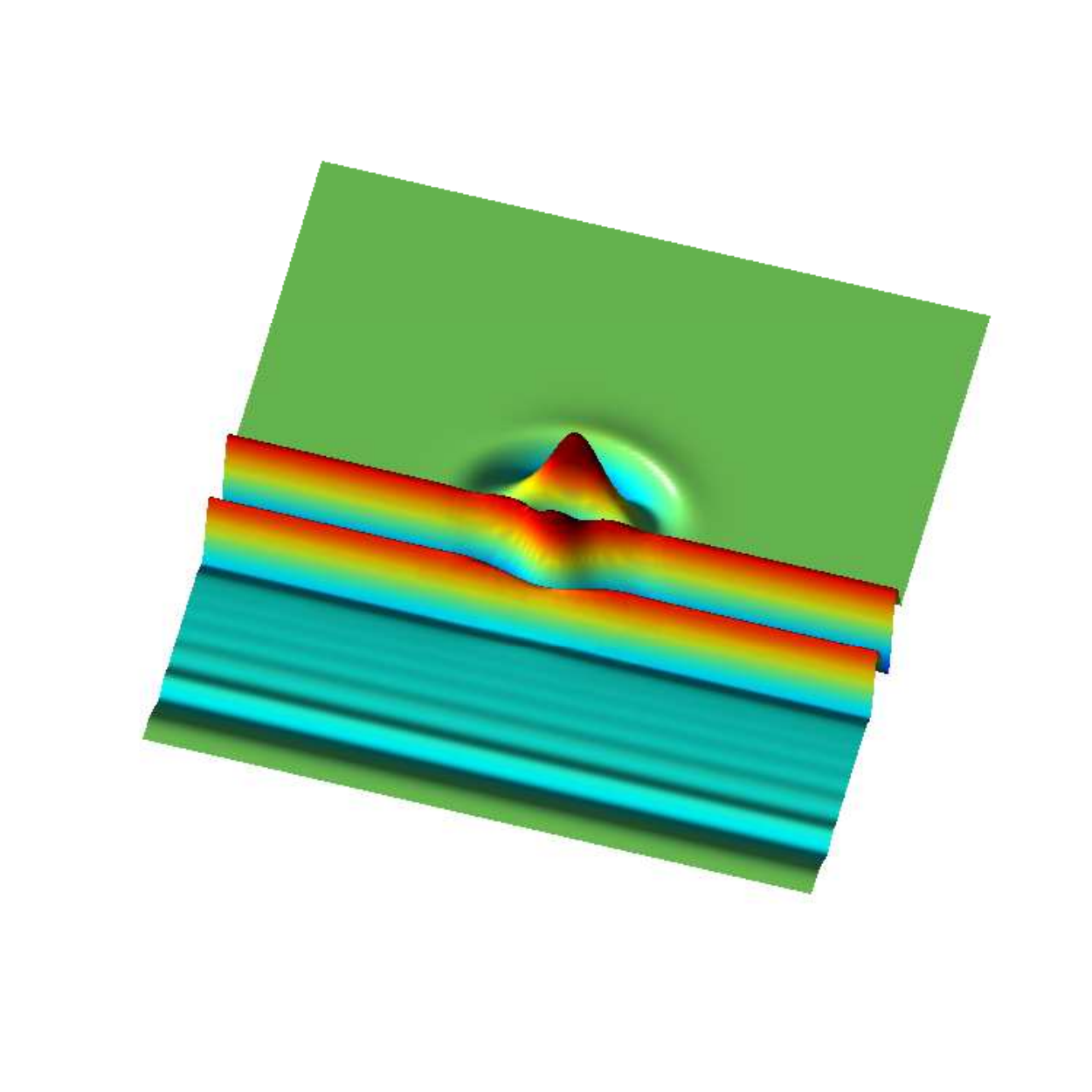}}
\subfigure[$t=0.51$]{\includegraphics[trim={1cm 1cm 1cm 1cm},clip, width = .24\textwidth]{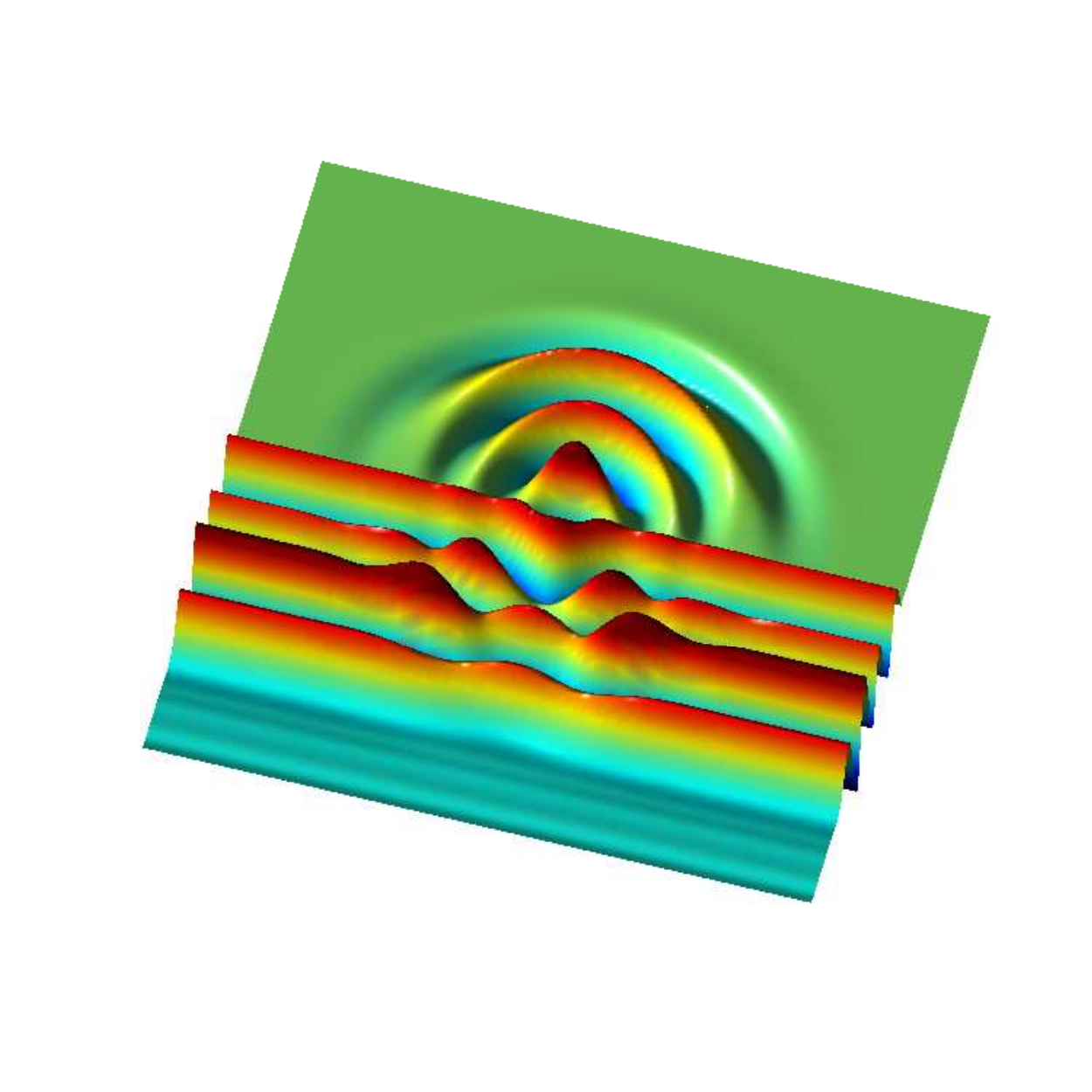}}
\subfigure[$t=1.01$]{\includegraphics[trim={1cm 1cm 1cm 1cm},clip, width = .24\textwidth]{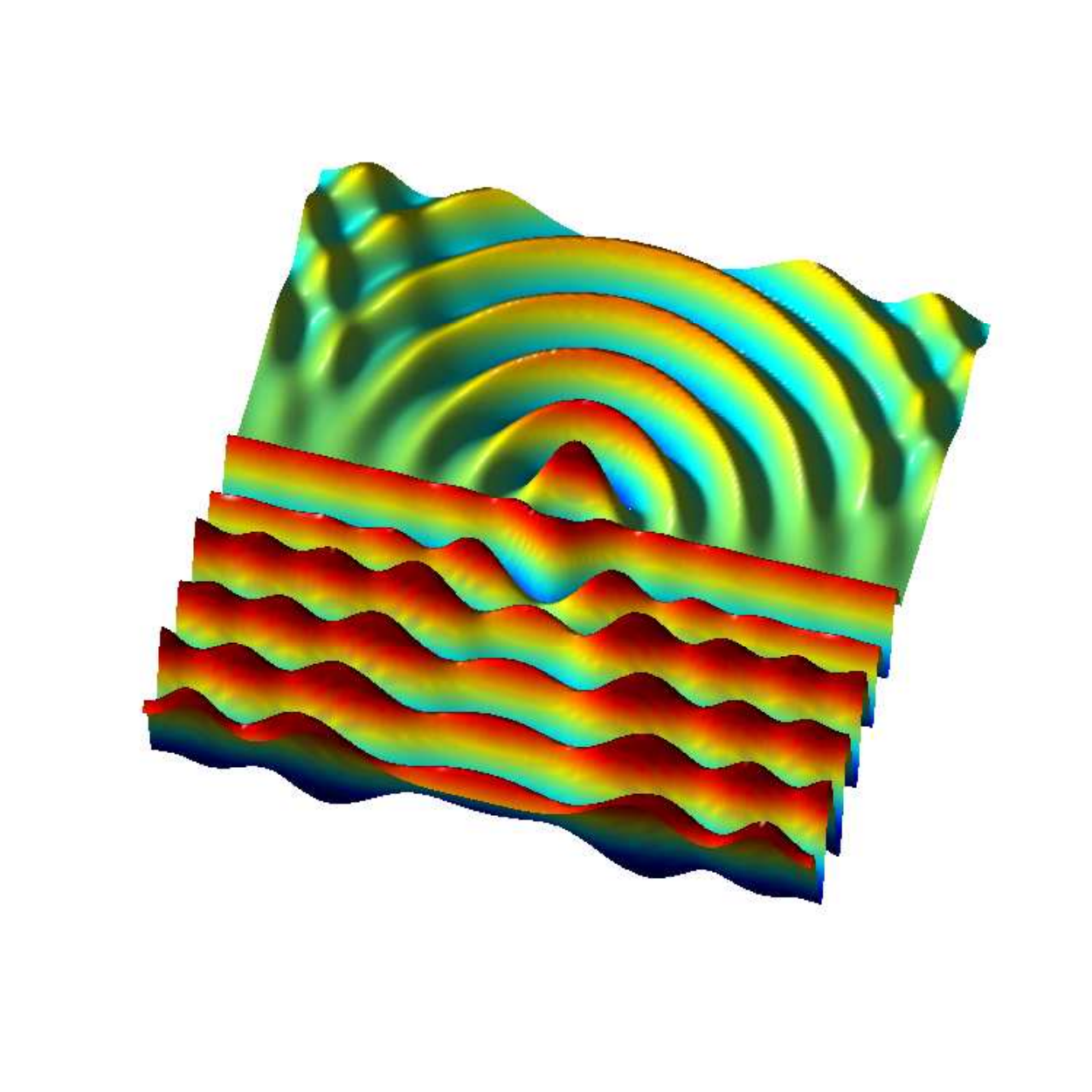}}
\subfigure[$t=2.01$]{\includegraphics[trim={1cm 1cm 1cm 1cm},clip, width = .24\textwidth]{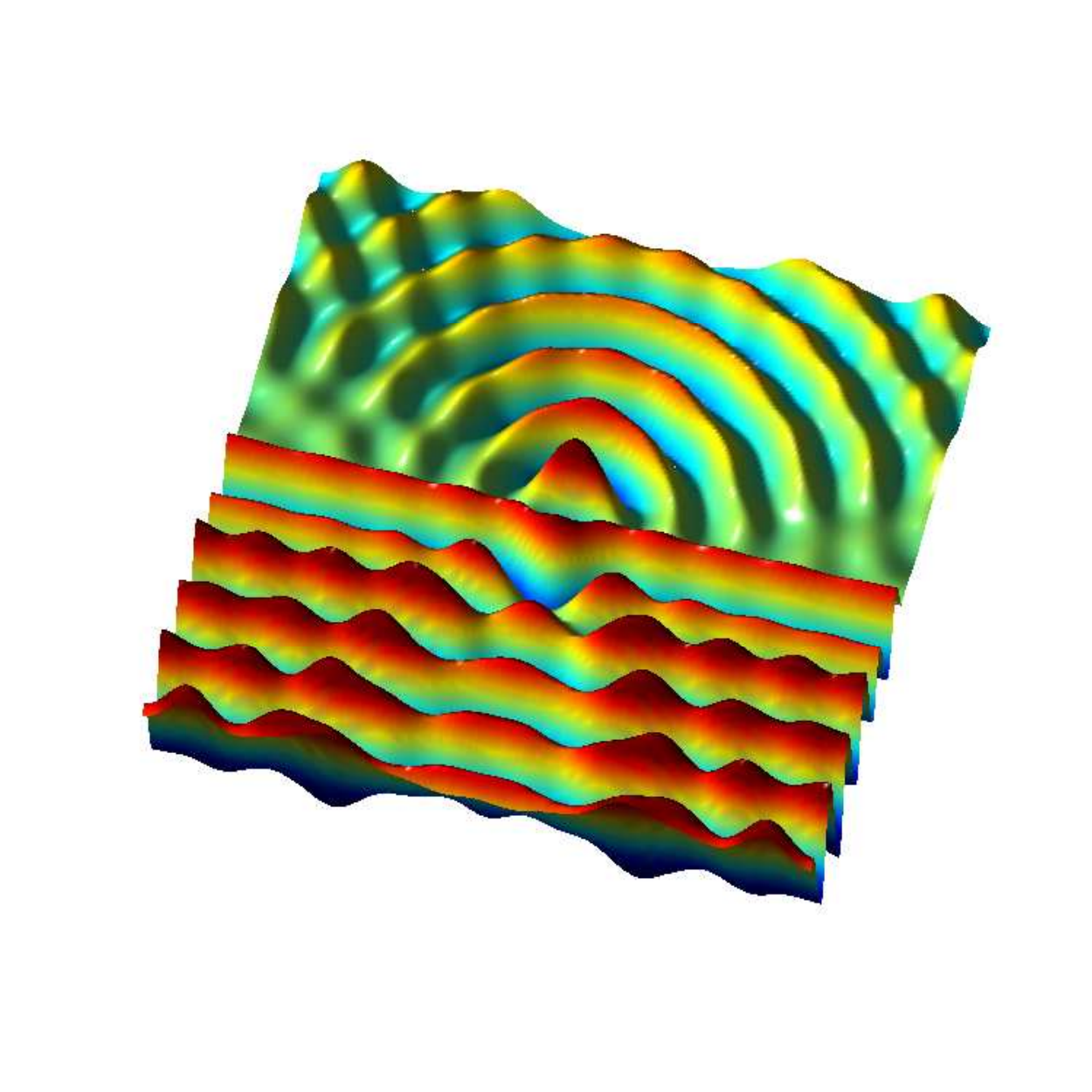}}
	\caption{Evolution of the slit diffraction grating problem, with aperture width $a=0.1$, grating periodicity $L_{y}=d=1$, and wave speed $c=1$. The CFL is fixed at 2.}
	\label{fig:slit_plots}
\end{figure}

We impose an incident plane wave of the form $u_{inc}(x,y,t)=\cos{\left(\omega t + ky\right)}$, where $k = 2 \pi / a$ and $\omega = k/c$, where $c$ is the wave speed. We impose periodic BCs at $x=\pm d/2$ (determining the periodicity of the grating), and homogeneous Dirichlet BCs on the screen. The outflow boundary conditions are imposed at $y=\pm L_{y}/2$. In Figure \ref{fig:slit_plots}, we observe the time evolution of the incident plane wave passing through the aperture, and the resulting interference patterns as the diffracted wave propagates across the periodic boundaries. The outflow boundary conditions allow the waves to propagate outside the domain, with no visible reflections at the artificial boundaries.

\section{Conclusion}
\label{sec:conclusion}

In this paper we have presented a fast, A-stable, second order method for solving the wave equation. Using the Method of Lines Transpose (MOL$^T$), we formulate the semi-discrete problem in one spatial dimension, and solve the resulting boundary value problem using an $O(N)$ fast convolution algorithm. From the the underlying exponential recurrence relation, upon which our algorithm is based, we also develop a means to employ domain decomposition, as well as formulate outflow boundary conditions. Additionally, we address the inclusion of point (delta function) sources into our solver, which is of interest in simulations which couple wave propagation with particle dynamics, i.e. plasma simulations.

We have also extended our fast algorithm to higher spatial dimensions using alternate direction implicit (ADI) splitting, and demonstrated second order convergence of our solver in non-Cartesian geometries, with DIrichlet, Neumann, periodic  and outflow boundary conditions. While our efforts to employ domain decomposition in 2D have only consisted of regular Cartesian subdomains, our results are very promising, and this will be investigated in future work.

\bibliographystyle{amsplain}
\bibliography{MOLT_ADI}

\end{document}